\definecolor{blackgreen}{RGB}{0,80,0}
\newcommand{\Real}{\mathbb R}
\newtheorem{thm}{Theorem}[section]
\newtheorem{prop}[thm]{Proposition}
\newtheorem{lemma}[thm]{Lemma}
\newtheorem{coro}[thm]{Corollary}
\numberwithin{equation}{section}
\title{Blow-up solutions of the intercritical inhomogeneous NLS equation: the non-radial case}
\author{Mykael Cardoso and Luiz Gustavo Farah} % Autor
\date{} % Data
\begin{document}
\maketitle
	
\begin{abstract}\noindent
In this paper we consider the inhomogeneous nonlinear Schr\"odinger (INLS) equation
\begin{align}\label{inls}
i \partial_t u +\Delta u +|x|^{-b} |u|^{2\sigma}u = 0, \,\,\, x \in \mathbb{R}^N
\end{align}
with $N\geq 3$. We focus on the intercritical case, where the scaling invariant Sobolev index $s_c=\frac{N}{2}-\frac{2-b}{2\sigma}$ satisfies $0<s_c<1$. In a previous work, for radial initial data in $\dot H^{s_c}\cap \dot H^1$, we prove the existence of blow-up solutions and also a lower bound for the blow-up rate. Here we extend these results to the non-radial case. We also prove an upper bound for the blow-up rate and a concentration result for general finite time blow-up solutions in $H^1$.
%where $N\geq 1$, $0<b<2$ and $\frac{2-b}{N}<\sigma<\frac{2-b}{N-2}$. 
%The scaling invariant Sobolev space is $\dot{H}^{s_c}$ with $s_c=\frac{N}{2}-\frac{2-b}{2\sigma}$. The restriction on $\sigma$ implies $0<s_c<1$ and the equation is called intercritical (i.e. mass-supercritical and energy-subcritical). Let $u_0\in \dot H^{s_c}\cap \dot H^1$ be a radial initial data and $u(t)$ the corresponding solution to the INLS equation. We first show that if $E[u_0]\leq 0$, then the maximal time of existence of the solution $u(t)$ is finite. Also, for all radially symmetric solution of the INLS equation with finite maximal time of existence $T^{\ast}>0$, then $\limsup_{t\rightarrow T^{\ast}}\|u(t)\|_{\dot H^{s_c}}=+\infty$. Moreover, under an additional assumption and recalling that $\dot{H}^{s_c} \subset L^{\sigma_c}$ with $\sigma_c=\frac{2N\sigma}{2-b}$, we can in fact deduce, for some $\gamma=\gamma(N,\sigma,b)>0$, the following lower bound for the blow-up rate
%\begin{align*}
%c\|u(t)\|_{\dot H^{s_c}}\geq \|u(t)\|_{L^{\sigma_c}}\geq |\log (T-t)|^{\gamma},\,\,\,\textnormal{as}\,\,\,t\rightarrow T^{\ast}.
%\end{align*}
%The proof is based on the ideas introduced for the $L^2$ super critical nonlinear Schr\"odinger equation in the work of Merle and Rapha\"el \cite{MR_Bsc} and here we extend their results to the INLS setting.
\end{abstract}

\section{Introduction}
In this work we consider the initial value problem (IVP) for the inhomogeneous nonlinear Schr\"odinger (INLS) equation
\begin{equation}
\begin{cases}
i \partial_t u + \Delta u + |x|^{-b} |u|^{2 \sigma}u = 0, \,\,\, x \in \mathbb{R}^N, \,t>0,\\
u(0) = u_0,
\end{cases}
\label{PVI}
\end{equation}
for $N\geq 3$ and some $b, \sigma>0$. This model is a generalization of the classical nonlinear Schr\"odinger (NLS) equation and also has applications in laser beam propagation upon a nonlinear optical medium \cite{GILL} and \cite{LIU}.

We are mainly interested in the intercritical regime. To understand this terminology, we recall that if $u(x,t)$ solves \eqref{PVI} so does $u_{\lambda}(x,t)=\lambda^{\frac{2-b}{2\sigma}}u(\lambda x, \lambda^2 t)$ and also $\|u_{\lambda}(0)\|_{\dot{H}^{s_c}}=\|u_0\|_{\dot{H}^{s_c}}$
where $s_c=\frac{N}{2}-\frac{2-b}{2\sigma}$. The mass-supercritical and energy-subcritical regime is such that $0<s_c<1$ and we can reformulate this condition as
\begin{equation}\label{IR}
\frac{2-b}{N}<\sigma<\frac{2-b}{N-2}.
\end{equation}

Over the last few years, the INLS equation has been the subject of a great deal of mathematical research. This is part of a recently growing interest in the global dynamics of NLS type equations lacking the usual symmetries. In the present case, the translation invariance is not present and there is a space-dependent singular coefficient in the nonlinearity. Several results concerning well-posedness theory, existence and concentration of blow-up solutions,  stability of solitary waves and asymptotic behavior of global  solutions have been recently obtained for the INLS model \cite{AT21}-\cite{CFGM20}, \cite{CG16}-\cite{GENSTU}, \cite{CARLOS}, \cite{GM21}, \cite{KLS21}, \cite{MMZ19} and \cite{M21}. In particular, $H^1$-solutions have the mass and the energy conserved in time, more precisely, if $u(t)$ is a solution to \eqref{PVI} on some time interval $I\subset \mathbb{R}$, $0\in I$, then for any $t\in I$
\begin{equation}\label{Mass}
M[u(t)] =\int  |u(x,t)|^2\, dx =M[u_0]
\end{equation}
and
\begin{equation}\label{Energy}
E[u(t)] =\frac12 \int  |\nabla u(x,t)|^2\, dx - \frac{1}{2\sigma+2}\int |x|^{-b}|u(x,t)|^{2\sigma+2}\,dx=E[u_0]. 
\end{equation}
%(\cite{{AT21}}, \cite{AC20}, \cite{BL21}, \cite{C21}, \cite{CC18}, \cite{CC21}, \cite{CF20}, \cite{CFG20}, \cite{CFGM20}, \cite{CaWe89}, \cite{CG16}, \cite{DF05}, \cite{DK21}, \cite{dinh2017blowup}, \cite{D19}, \cite{D21}, \cite{Farah}, \cite{FG17}, \cite{FG20}, \cite{FO05},\cite{GENSTU}, \cite{GILL}, \cite{CARLOS}, \cite{GM21}, \cite{HR07}, \cite{KLS21}, \cite{LIU}, \cite{MR_Bsc}, \cite{MRS2014}, \cite{MMZ19}, \cite{M21}, \cite{W83}) 

The main goal of this paper is to remove the radial assumption in our previous work \cite{CF20}. This is in the same vein as the recent papers by \citet{BL21} and \citet{GM21}, where the authors reported results for the INLS equation in the non-radial case that have remained out of reach so far for the NLS equation. Although the presence of the term $|x|^{-b}$ in the nonlinearity introduces several challenging technical difficulties in the study of well-posedness and scattering theories, its decay away from the origin provides a new tool in the study of the global dynamics of solutions that avoid the need for a radial assumption.

Our first result is related to finite time solutions in $H^1$. The existence of such solutions for the INLS equation \eqref{PVI} was obtained by the second author \cite{Farah} in the virial space $\Sigma:=\{f\in H^1;\,\,|x|f\in L^2 \}$. This result was extended by \citet{dinh2017blowup} in radial setting and more recently by Ardila and the first author in \cite{AC20} and \citet{BL21} for general initial data. In particular, the result in  \cite{BL21} says that if $\frac{2-b}{N}<\sigma<\min\left\{\frac{2-b}{N-2},\frac{2}{N}\right\}$, then blow-up occurs in finite time. Under the same restriction on $\sigma$, we prove an universal space-time upper bound on the blow-up rate.% in the intercrical regime $0<s_c<1$.

\begin{thm}\label{thmblowdisper}
	Let $N\geq3$, $0<b<\min\{\frac{N}{2},2\}$ and $\frac{2-b}{N}<\sigma<\min\left\{\frac{2-b}{N-2},\frac{2}{N}\right\}$. Let $u_0\in H^1 $ and assume that the maximal time of existence $T^{\ast}>0$ for the corresponding solution $u\in C([0,T^{\ast}):H^1 )$ of \eqref{PVI} is finite. Define $\beta=\frac{2-\sigma N}{b}$, then there exists a universal constant $C=C(u_0, \sigma, b, N)>0$ such that the following space-time upper bound holds
	\begin{align}\label{blowupestdisp}
	\int_{t}^{T^{\ast}}(T^{\ast}-\tau)\|\nabla u(\tau)\|_{L^2}^2\,d\tau\leq C(T^{\ast}-t)^{\frac{2\beta}{1+\beta}},
	\end{align}
for $t$ close enough to $T^{\ast}$.
\end{thm}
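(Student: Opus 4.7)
The plan is to derive the bound by combining a localized virial identity with a double integration by parts against the weight $(T^{\ast}-\tau)(\tau-s)$, which vanishes at both endpoints and thereby neutralizes the boundary terms involving $V_R'$ at the blow-up time.

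First, I would introduce a smooth radial cutoff $\phi_R(x) = R^2\phi(|x|/R)$ with $\phi(r) = r^2$ for $r \leq 1$, $\phi$ constant for $r \geq 2$, and $0 \leq \phi'' \leq 2$, and set $V_R(t) = \int \phi_R|u(x,t)|^2\, dx$. The standard localized virial computation for \eqref{PVI}, conservation of energy to eliminate the potential term $\int|x|^{-b}|u|^{2\sigma+2}$, the Gagliardo--Nirenberg inequality $\int|u|^{2\sigma+2} \leq C\|u_0\|_{L^2}^{2\sigma+2-N\sigma}\|\nabla u\|_{L^2}^{N\sigma}$ (valid since $N\sigma < 2$), and the decay $|x|^{-b}\leq R^{-b}$ on $\{|x|>R\}$ used to control tail contributions, together yield the pointwise bound
\[
V_R''(t) \leq -4\mu\|\nabla u(t)\|_{L^2}^2 + 8(\mu+2)E[u_0] + CR^{-2} + CR^{-b}\|\nabla u(t)\|_{L^2}^{N\sigma},
\]
where $\mu = N\sigma+b-2 > 0$ by the intercritical hypothesis.

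For $s \in [0, T^{\ast})$ I would multiply this inequality by the non-negative weight $(T^{\ast}-\tau)(\tau-s)$ and integrate on $[s, T^{\ast}-\epsilon]$. Integration by parts twice, together with $0 \leq V_R \leq CR^2$, yields
\[
4\mu J_s^\epsilon \leq \epsilon(T^{\ast}-s)|V_R'(T^{\ast}-\epsilon)| + CR^2(T^{\ast}-s) + C(T^{\ast}-s)^3 + CR^{-b}(J_s^\epsilon)^{N\sigma/2}(T^{\ast}-s)^{3(2-N\sigma)/2},
\]
where $J_s^\epsilon = \int_s^{T^{\ast}-\epsilon}(T^{\ast}-\tau)(\tau-s)\|\nabla u\|_{L^2}^2\,d\tau$ and the last term follows from H\"older in time with conjugate exponents $2/(N\sigma)$ and $2/(2-N\sigma)$. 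The boundary contribution $\epsilon V_R'(T^{\ast}-\epsilon)$ is handled by selecting a sequence $\epsilon_n \to 0$ via a mean value argument on dyadic intervals near $T^{\ast}$: the uniform bound $V_R \leq CR^2$ alone produces points $\tau_n^{\ast}$ with $(T^{\ast}-\tau_n^{\ast})|V_R'(\tau_n^{\ast})| \leq CR^2$, so this term is absorbed into $CR^2(T^{\ast}-s)$, and passing to the limit with $J_s^{\epsilon_n} \nearrow J_s$ removes the $\epsilon$-dependence.

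For $T^{\ast}-s$ small the $(T^{\ast}-s)^3$ term is subdominant, and balancing the remaining two by the choice $R^{2+b} \sim J_s^{N\sigma/2}(T^{\ast}-s)^{(4-3N\sigma)/2}$ gives the self-improving inequality $J_s^{(2+b-N\sigma)/(2+b)} \leq C(T^{\ast}-s)^{(6+b-3N\sigma)/(2+b)}$, hence $J_s \leq C(T^{\ast}-s)^{(6+b-3N\sigma)/(2+b-N\sigma)}$. To convert this into the desired bound on $I(t) := \int_t^{T^{\ast}}(T^{\ast}-\tau)\|\nabla u\|_{L^2}^2\,d\tau$, note that on $[(s+T^{\ast})/2, T^{\ast}]$ one has $\tau-s \geq (T^{\ast}-s)/2$, so $J_s \geq \tfrac{1}{2}(T^{\ast}-s)\, I\big((s+T^{\ast})/2\big)$. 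Setting $t = (s+T^{\ast})/2 \geq T^{\ast}/2$ and substituting $T^{\ast}-s = 2(T^{\ast}-t)$, the exponent simplifies via $\beta = (2-N\sigma)/b$ (so $2-N\sigma = b\beta$ and $2+b-N\sigma = b(1+\beta)$) to $2\beta/(1+\beta)$, giving the conclusion for $t$ close enough to $T^{\ast}$. The main technical obstacle is the rigorous handling of the integration by parts at $T^{\ast}$ and the control of the $\epsilon V_R'(T^{\ast}-\epsilon)$ boundary term, where the uniform bound on $V_R$ coming from mass conservation plays the decisive role.
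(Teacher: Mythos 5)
Your proposal is correct, and its core mechanism is the same as the paper's: your pointwise virial bound on $V_R''$ is exactly the content of Lemma \ref{lemintradial} (note $4\mu=8\sigma s_c$ and $8(\mu+2)=16(\sigma s_c+1)$) after mass conservation, and the decisive non-radial ingredient is identical, namely bounding the exterior potential term by $R^{-b}\|\nabla u\|_{L^2}^{N\sigma}\|u_0\|_{L^2}^{2\sigma+2-N\sigma}$ using the decay of $|x|^{-b}$ in place of a radial Strauss-type estimate. Where you genuinely diverge is the final time integration, which the paper delegates to Merle--Rapha\"el--Szeftel after first applying Young's inequality pointwise in time to reach $\sigma s_c\|\nabla u(t)\|_{L^2}^2+z_R''(t)\le CR^{-2b/(2-\sigma N)}$; you instead integrate against the symmetric weight $(T^{\ast}-\tau)(\tau-s)$, which annihilates the $V_R'$ boundary term at $\tau=s$, handle the right endpoint by a dyadic mean-value selection $\epsilon_n\to0$, and keep the nonlinear tail until a H\"older-in-time step. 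This is a legitimate, self-contained alternative to the cited argument, and the exponents agree: $\tfrac{6+b-3N\sigma}{2+b-N\sigma}-1=\tfrac{2(2-N\sigma)}{2+b-N\sigma}=\tfrac{2\beta}{1+\beta}$. Two small points you should make explicit. First, the weighted integration of the constant terms actually produces $C(1+R^{-2})(T^{\ast}-s)^3$, not just $C(T^{\ast}-s)^3$; its subdominance requires $R\gtrsim(T^{\ast}-s)^{1/2}$, which does hold for the optimized radius $R\sim(T^{\ast}-s)^{\beta/(1+\beta)}$ because $\sigma>\tfrac{2-b}{N}$ forces $\beta<1$ and hence $\tfrac{\beta}{1+\beta}<\tfrac12$ --- but this is exactly where the hypothesis enters and deserves a line. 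Second, your balancing choice of $R$ depends on the unknown $J_s$; it is cleaner either to first deduce $J_s<\infty$ from the self-improving inequality $J_s\le A+BJ_s^{N\sigma/2}$ (valid since $N\sigma<2$) at a fixed $R$, or to apply Young's inequality after the H\"older step so as to decouple $J_s$ from the optimization in $R$, as the paper does pointwise in time.
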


This type of result was first proved by Merle, Rapha\"{e}l, and Szeftel \cite{MRS2014} for the NLS equation and then extended in \cite{CF20} for the INLS equation, both in the radial case. Here we show that the radial assumption can be removed in the INLS setting. The new ingredient is a localized virial type inequality satisfied by the non-radial solutions of the INLS equation (see Lemma \ref{lemintradial}).

%As a consequence of \eqref{blowupestdisp} we have the following upper bound
%	\begin{align}
%	\liminf_{t\uparrow T^{\ast}}(T^{\ast}-t)^{\frac{1}{1+\beta}}\|\nabla u(t)\|_{L^2}<+\infty.
%	\end{align}
As a consequence of \eqref{blowupestdisp}, we deduce an upper bound on the blow-up rate along a sequence of times. Indeed, there exists a sequence $\{t_n\}_{n=1}^{+\infty}\subset [0,T^{\ast})$ with $t_n\to T^{\ast}$ such that
	\begin{align}
	\|\nabla u(t_n)\|_{L^2}\leq \frac{C}{(T^{\ast}-t_n)^{\frac{1}{1+\beta}}},\,\,\,\, \mbox{ as }\,\,\,\, n\to+\infty.
	\end{align}
%It should be emphasized that it is an open problem to show the previous upper bound for every sequence $\{t_n\}_{n=1}^{+\infty}$ with $t_n\to T^{\ast}$ both in radial or non-radial setting.

Finite time solutions in $H^1$ also enjoy other important properties. For instance, from the $H^1$ local Cauchy theory obtained by \citet{CARLOS}, if $T^{\ast}<\infty$, then
\begin{equation}\label{GradBU1}
\|\nabla u(t)\|_{L^2}\to \infty, \,\,\, \mbox{as} \,\,\, t \to T^{\ast}.
\end{equation}
Moreover, the recent work by \citet{AT21} proved that these solutions obey the lower bound 
$$
\left\|\nabla u(t)\right\|_{L^2}\geq \frac{c}{(T^{\ast}-t)^{\frac{1-s_c}{2}}}.
$$

On the other hand, recalling that ${H}^{1} \subset L^{\sigma_c}$ for $\sigma_c=\frac{2N\sigma}{2-b}$, in a joint work with Guzm\'an we obtain the following Gagliardo-Nirenberg type inequality  \cite{CFG20}
\begin{align}\label{GNine}
\int |x|^{-b} |u(x)|^{2\sigma+2}\,dx\leq \frac{\sigma+1}{\|V\|_{L^{\sigma_c}}^{2\sigma}}\|\nabla u\|_{L^2}^2\|u\|_{L^{\sigma_c}}^{2\sigma},
\end{align}
where $V$ is a solution to elliptic equation 
\begin{align}\label{elptcpc1}
\Delta V+|x|^{-b}|V|^{2\sigma}V-|V|^{\sigma_c-2}V=0
\end{align}
 with minimal $L^{\sigma_c}$-norm. Applying the last inequality to the energy conservation \eqref{Energy} we have
\begin{equation}
	E(u_0) \geq \frac{1}{2}\,\|\nabla u(t)\|_{L^2}^2\left(1-\frac{\|u(t)\|_{L^{\sigma_c}}^{2\sigma}}{\|V\|_{L^{\sigma_c}}^{2\sigma}}\right),
	\end{equation}
which implies that finite time solutions must satisfy $\sup_{t\in [0,T^*)}\|u(t)\|_{L^{\sigma_c}}\geq \|V\|_{L^{\sigma_c}}$. In the next result, we prove that the $L^{\sigma_c}$ norm of such solutions concentrates around the origin in the non-radial case.

\begin{thm}\label{Tlpc}
Let $\sigma_c=\frac{2N\sigma}{2-b}$ such that $\dot{H}^{s_c} \subset L^{\sigma_c}$. 	Assume $N\geq3$, $0<b<\min\{\frac{N}{2},2\}$ and $\frac{2-b}{N}<\sigma<\min\left\{\frac{2-b}{N-2},\frac{2}{N}\right\}$. Given $u_0\in H^1 $ so that the maximal time of existence $T^{\ast}>0$ for the corresponding solution $u\in C([0,T^{\ast}):H^1 )$ of \eqref{PVI} is finite. Then there exist positive constants $c_0$ and $c_1$ depending only on $N,\sigma$ and $b$ such that 
\begin{equation}\label{fint}
\liminf_{t\to T^*}\int_{|x|\leq c_{u_0}\|\nabla u(t)\|^{-\frac{2-\sigma N}{b}}_{L^2}}|u(x,t)|^{\sigma_c}\,dx\geq c_0,
\end{equation}
where $c_{u_0}=c_1\max\left\{\|u_0\|_{L^2}, \|u_0\|_{L^2}^{\frac{2\sigma+2-\sigma N}{b}}\right\}$.
\end{thm}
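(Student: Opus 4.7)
The plan is to combine energy conservation with the sharp inhomogeneous Gagliardo--Nirenberg inequality \eqref{GNine} and the classical Gagliardo--Nirenberg inequality in $H^1$. The starting estimate, using \eqref{Energy} and \eqref{GradBU1}, is that for $t$ close to $T^{\ast}$,
\[
\int|x|^{-b}|u(t)|^{2\sigma+2}\,dx = (2\sigma+2)\bigl[\tfrac12\|\nabla u(t)\|_{L^2}^2-E(u_0)\bigr]\geq (\sigma+1)\|\nabla u(t)\|_{L^2}^2.
\]
Everything else is aimed at showing that essentially all of this quantity is supported in the ball of radius $R(t)=c_{u_0}\|\nabla u(t)\|_{L^2}^{-\beta}$, after which the sharp inequality \eqref{GNine} will convert an interior lower bound for the potential term into a lower bound for $\int_{|x|\leq 2R}|u|^{\sigma_c}\,dx$.

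Concretely I would split the potential integral at radius $R$. On $|x|>R$ the pointwise bound $|x|^{-b}\leq R^{-b}$, mass conservation, and the classical Gagliardo--Nirenberg inequality
\[
\|u\|_{L^{2\sigma+2}}^{2\sigma+2}\leq C\|\nabla u\|_{L^2}^{N\sigma}\|u\|_{L^2}^{2\sigma+2-N\sigma},
\]
available because $\sigma<2/N$, produce a bound of the form $C\,c_{u_0}^{-b}\|u_0\|_{L^2}^{2\sigma+2-\sigma N}\|\nabla u(t)\|_{L^2}^{2}$. The exponent $\beta=(2-\sigma N)/b$ is chosen precisely to balance the powers of $\|\nabla u(t)\|_{L^2}$; taking $c_{u_0}$ at least a large multiple of $\|u_0\|_{L^2}^{(2\sigma+2-\sigma N)/b}$ then keeps the exterior piece below half of the lower bound above, leaving
\[
\int_{|x|\leq R(t)}|x|^{-b}|u(t)|^{2\sigma+2}\,dx\geq \tfrac{\sigma+1}{2}\|\nabla u(t)\|_{L^2}^2.
\]
Next I would pick a smooth cutoff $\chi_R$ with $\chi_R=1$ on $\{|x|\leq R\}$, supported in $\{|x|\leq 2R\}$ and $|\nabla\chi_R|\lesssim 1/R$, and apply \eqref{GNine} to $\chi_R u$:
\[
\int_{|x|\leq R}|x|^{-b}|u|^{2\sigma+2}\,dx\leq \frac{\sigma+1}{\|V\|_{L^{\sigma_c}}^{2\sigma}}\|\nabla(\chi_R u)\|_{L^2}^{2}\,\|\chi_R u\|_{L^{\sigma_c}}^{2\sigma}.
\]
Expanding $\|\nabla(\chi_R u)\|_{L^2}^2\lesssim \|\nabla u\|_{L^2}^2+R^{-2}\|u_0\|_{L^2}^2$, the strict inequality $\beta<1$ (equivalent to the lower bound $\sigma>(2-b)/N$) together with the additional requirement $c_{u_0}\gtrsim \|u_0\|_{L^2}$ makes the second term subordinate to the first for $t$ near $T^{\ast}$; combining with the previous step, using $\|\chi_R u\|_{L^{\sigma_c}}^{\sigma_c}\leq \int_{|x|\leq 2R}|u|^{\sigma_c}\,dx$ and dividing through by $\|\nabla u(t)\|_{L^2}^2$ then yields $\int_{|x|\leq 2R(t)}|u(t)|^{\sigma_c}\,dx\geq c_0(N,\sigma,b)>0$, which is \eqref{fint} after absorbing the factor $2$ into $c_1$.

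The main subtle point is the bookkeeping of $c_{u_0}$: two lower bounds on it arise, namely $c_{u_0}\gtrsim \|u_0\|_{L^2}^{(2\sigma+2-\sigma N)/b}$ from the exterior Gagliardo--Nirenberg step and $c_{u_0}\gtrsim \|u_0\|_{L^2}$ from absorbing the cutoff commutator, and in general neither dominates the other uniformly in the size of $\|u_0\|_{L^2}$. Taking $c_{u_0}=c_1\max\{\|u_0\|_{L^2},\|u_0\|_{L^2}^{(2\sigma+2-\sigma N)/b}\}$ handles both regimes simultaneously and is what gives the precise constant in \eqref{fint}. No radial assumption enters at any stage: the concentration is pinned to the origin by the singular weight $|x|^{-b}$, and the only ingredients are conservation of mass and energy, the blow-up alternative \eqref{GradBU1}, the sharp inequality \eqref{GNine} and the classical Gagliardo--Nirenberg estimate in $H^1$.
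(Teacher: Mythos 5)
Your argument is correct and reaches the stated conclusion, but it takes a genuinely leaner route than the paper. The two proofs share the essential skeleton: the spatial truncation at radius $R(t)\sim c_{u_0}\|\nabla u(t)\|_{L^2}^{-(2-\sigma N)/b}$, with the exterior contribution killed by the decay of $|x|^{-b}$ together with Gagliardo--Nirenberg and mass conservation (this is exactly the paper's estimate \eqref{RegGN} and the choice of $a_1$ in \eqref{eq_48}), followed by the conversion of the resulting interior lower bound into an $L^{\sigma_c}$ lower bound via the sharp inequality \eqref{GNine}. Where you diverge is that the paper additionally performs a frequency decomposition $u_1=u_{1L}+u_{1H}$ at scale $\rho(t)=a_2\|\nabla u(t)\|_{L^2}^{1/(1-s_c)}$, discards the high-frequency piece using \eqref{GNine} together with the multiplier bound \eqref{u1H2}, and only then applies \eqref{GNine} to the low-frequency remainder $u_{1L}$, recovering $u_1$ at the end by Young's convolution inequality. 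You instead apply \eqref{GNine} directly to the sharp spatial truncation $\chi_R u$ and absorb the commutator term $\|\nabla\chi_R\,u\|_{L^2}\lesssim R^{-1}\|u_0\|_{L^2}$ using $(2-\sigma N)/b<1$ and $c_{u_0}\gtrsim\|u_0\|_{L^2}$ --- the same computation as the paper's \eqref{u1H3} --- which makes the frequency localization unnecessary for the theorem as stated. The paper's extra layer is the structure one would need to pursue a refined (possibly smaller) concentration window in the spirit of Holmer--Roudenko, as the remark following the paper's proof indicates, so nothing is lost for \eqref{fint} itself. One cosmetic imprecision: your displayed bound $\int|x|^{-b}|u(t)|^{2\sigma+2}\,dx\geq(\sigma+1)\|\nabla u(t)\|_{L^2}^2$ holds as written only when $E(u_0)\leq 0$; the identity gives $(\sigma+1)\|\nabla u(t)\|_{L^2}^2-(2\sigma+2)E(u_0)$, and one should invoke \eqref{GradBU1} to dominate the constant term and settle for, say, $\geq\|\nabla u(t)\|_{L^2}^2$ for $t$ near $T^{\ast}$, which is all the rest of your argument uses.
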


Note that the concentration window size $c_{u_0}\|\nabla u(t)\|^{-\frac{2-\sigma N}{b}}_{L^2}$ can be made arbitrarily small for times close to the finite maximal time of existence in view of \eqref{GradBU1} and \eqref{Mass}. To prove this result we use a spatial and frequency localization also employed by \citet{HR07} for the radial 3D cubic NLS equation (see also the work \cite{CC18}, where Campos and the first author treat the radial INLS setting).

Next we turn to the IVP \eqref{PVI} with initial data $u_0\in \dot H^{s_c}\cap \dot H^1$. Recently, in collaboration with Guzm\'an \cite{CFG20}, we proved that the IVP \eqref{PVI} is locally well-posed in this space. In \cite{CF20}, applying the techniques developed by \citet{MR_Bsc} for the mass-supercritical NLS equation, we obtain the existence of radially symmetric solution non-positive energy solutions with finite maximal time of existence and the behavior of the $\dot H^{s_c}$ norm. Here we consider a general initial data. Our main results in this direction are the following.

\begin{thm}\label{scteo1} Let $N\geq3$, $0<b<\min\{\frac{N}{2},2\}$ and $\frac{2-b}{N}<\sigma<\min\left\{\frac{2-b}{N-2},\frac{2}{N}\right\}$. If $u_0\in \dot H^{s_c} \cap\dot H^1 $ and $E(u_0)\leq 0,$ then the maximal time of existence $T^{\ast}>0$ of the corresponding solution $u(t)$ to \eqref{PVI} is finite.
\end{thm}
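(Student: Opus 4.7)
The strategy is a contradiction argument based on a non-radial localized virial identity. Assume that the maximal time of existence is infinite, $T^{\ast}=+\infty$. Since $u_0\in \dot H^{s_c}\cap \dot H^1$, the solution satisfies $u(t)\in L^{\sigma_c}(\mathbb{R}^N)$ for every $t\geq 0$ via the Sobolev embedding $\dot H^{s_c}\hookrightarrow L^{\sigma_c}$. Fix a smooth non-negative cutoff $\phi_R\in C_c^{\infty}(\mathbb{R}^N)$ that agrees with $|x|^2$ on $\{|x|\leq R\}$ and is supported in $\{|x|\leq 2R\}$, and introduce the localized virial functional
\begin{equation*}
I_R(t)=\int \phi_R(x)\,|u(x,t)|^2\,dx.
\end{equation*}
Although $u$ is not assumed to lie in $L^2$, the compact support of $\phi_R$ together with H\"older's inequality with exponent $\sigma_c/2$ ensures that $I_R(t)$ is finite for every $t\geq 0$.

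Next I would apply the non-radial localized virial inequality furnished by Lemma \ref{lemintradial}, and rewriting the potential part through the energy identity \eqref{Energy}, obtain an estimate of the form
\begin{equation*}
I_R''(t)\leq 16\,E(u_0) - \frac{4(N\sigma+b-2)}{\sigma+1}\int |x|^{-b}|u(x,t)|^{2\sigma+2}\,dx + \mathcal{R}_R(u(t)),
\end{equation*}
where $N\sigma+b-2>0$ in the intercritical regime \eqref{IR} and $\mathcal{R}_R$ collects boundary terms supported in $\{|x|\gtrsim R\}$. Thanks to the decay of the inhomogeneity $|x|^{-b}$ combined with H\"older and Hardy-type estimates, $\mathcal{R}_R$ obeys a bound of the shape $|\mathcal{R}_R(u(t))|\leq C R^{-\alpha}(1+\|\nabla u(t)\|_{L^2}^2)$ for some $\alpha=\alpha(b,\sigma,N)>0$. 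Moreover the hypothesis $E(u_0)\leq 0$ yields directly the pointwise bound $\int|x|^{-b}|u(t)|^{2\sigma+2}\,dx\geq (\sigma+1)\|\nabla u(t)\|_{L^2}^2$, and combined with \eqref{GNine} it also forces the $L^{\sigma_c}$ threshold $\|u(t)\|_{L^{\sigma_c}}\geq \|V\|_{L^{\sigma_c}}$ for every $t\geq 0$.

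Putting these ingredients together and choosing $R$ large enough so that the remainder is absorbed into the kinetic contribution, I would arrive at
\begin{equation*}
I_R''(t)\leq 16\,E(u_0) - 2(N\sigma+b-2)\|\nabla u(t)\|_{L^2}^2\leq 16\,E(u_0).
\end{equation*}
When $E(u_0)<0$ the right-hand side is a strictly negative constant, and two successive integrations in time produce $I_R(t)\to -\infty$, contradicting the non-negativity of $I_R$. In the borderline case $E(u_0)=0$, the leading term vanishes and one must exclude the possibility that $\int_0^t\|\nabla u(\tau)\|_{L^2}^2\,d\tau$ stays bounded as $t\to+\infty$; this is achieved through a coercivity argument relying on the $L^{\sigma_c}$ threshold, on the identity $\int|x|^{-b}|u|^{2\sigma+2}=(\sigma+1)\|\nabla u\|_{L^2}^2$ forced by $E=0$, and if necessary on a concentration-type analysis of the solution at infinity to rule out the dispersive behavior that would be incompatible with the threshold.

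The main obstacle I anticipate is the control of the boundary terms $\mathcal{R}_R$ in the virial identity without radial symmetry. Classically one exploits the pointwise decay provided by the radial Sobolev embedding to handle the Hessian term $\int \partial_j\partial_k\phi_R\,\partial_j u\,\partial_k\bar u$ on the support of $\nabla\phi_R$, a tool that is unavailable in the present non-radial setting. This is exactly where Lemma \ref{lemintradial} plays its decisive role, trading radiality for the decay of the factor $|x|^{-b}$ at infinity to localize the nonlinear contributions. Carefully tracking the explicit $R$-dependence of $\mathcal{R}_R$ and ensuring that it depends only on quantities controlled by the conservation laws and by $\|u(t)\|_{L^{\sigma_c}}$ is therefore the delicate technical point that needs to be handled to close the argument.
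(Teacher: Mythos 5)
Your plan runs the classical Glassey-type convexity argument on the localized virial $I_R(t)=\int\phi_R|u|^2$, but there is a genuine gap exactly at the step you gloss over: the claimed remainder bound $|\mathcal{R}_R(u(t))|\leq CR^{-\alpha}(1+\|\nabla u(t)\|_{L^2}^2)$ with a constant independent of $t$. The error terms produced by Lemma \ref{lemintradial} are $\frac{c}{R^2}\int_{2R\leq|x|\leq 4R}|u(t)|^2\,dx$ and $c\int_{|x|\geq R}|x|^{-b}|u(t)|^{2\sigma+2}\,dx$. In Theorem \ref{scteo1} the data is only in $\dot H^{s_c}\cap\dot H^1$, not in $L^2$, so mass conservation \eqref{Mass} is unavailable and the annulus term is \emph{not} controlled uniformly in time for a fixed $R$: the best one can say is $\frac{1}{R^2}\int_{2R\leq|x|\leq 4R}|u(t)|^2\,dx\leq cR^{-2(1-s_c)}\rho(u(t),2R)$, and likewise the nonlinear tail is handled by Lemma \ref{lemaradialGN} only at the price of a factor $[\rho(u(t),R)]^{\frac{2\sigma+2-\sigma N}{2-\sigma N}}$. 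The quantity $\rho(u(t),R)$ defined in \eqref{defrho} is time-dependent, is controlled by $\|u(t)\|_{\dot H^{s_c}}^2$, and there is no a priori uniform-in-time bound on that norm (indeed Corollary \ref{cor13} asserts it blows up for finite-time solutions). So "choose $R$ large and absorb the remainder" cannot be done uniformly over an infinite time horizon, and the two integrations that are supposed to drive $I_R(t)\to-\infty$ do not close. The borderline case $E(u_0)=0$ is also left essentially unproved in your sketch.

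The paper's route is structurally different and is designed precisely to circumvent this. It does not integrate the virial inequality directly to $-\infty$; instead it runs a bootstrap (Proposition \ref{prop1}) that simultaneously propagates a space-time dispersive bound \eqref{prop1i} and a uniform bound \eqref{prop1ii} on the Morrey--Campanato seminorm at the \emph{time-dependent} radius $R=A\sqrt{\tau_0}$, with the constant $A_\varepsilon$ tuned to $M_0=4\|v_0\|_{L^{\sigma_c}}/\|V\|_{L^{\sigma_c}}$ so that the seminorm factor in \eqref{GNradial} can be absorbed. When $E(u_0)\leq 0$ the hypotheses of Proposition \ref{prop1} hold on any interval $[0,\tau_*]$; if $T^*=+\infty$, the dispersive bound \eqref{prop1i} produces, by a mean-value argument, times $\tau_0'$ with $\|\nabla v(\tau_0')\|_{L^2}^2\lesssim \tau_0^{s_c-1}$, hence $\lambda_v(\tau_0')\gtrsim\sqrt{\tau_0}\to\infty$ with $F_*$ bounded, and Proposition \ref{prop2} then yields the lower bound \eqref{tprop2} on the local $L^2$ mass of the \emph{initial datum} on balls of radius $D_*\lambda_v(\tau_0')\to\infty$; this contradicts the vanishing property \eqref{radial2} of $\dot H^{s_c}$ functions. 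If you want to salvage your approach, you would in effect have to reprove the uniform control of $\rho(u(t),R)$ that Proposition \ref{prop1} provides, which is the heart of the Merle--Rapha\"el scheme rather than a technical footnote.
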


%The next result describes the behavior of the $\dot H^{s_c}$ norm for any solution of \eqref{PVI} with initial data in $\dot H^{s_c}\cap \dot H^1$ and finite maximal time of existence.
%\begin{thm}\label{scteo2}
%	Let $s_c=\frac{N}{2}-\frac{2-b}{2\sigma}$ and $\sigma_c=\frac{2N\sigma}{2-b}$ such that $\dot{H}^{s_c} \subset L^{\sigma_c}$. Assume $N\geq 3$, $0<b<\min\left\{\frac{N}{2},2\right\}$ and $\frac{2-b}{N}<\sigma<\frac{2-b}{N-2}$. Given $u_0\in \dot H^{s_c}\cap \dot H^1$ radially symmetric and assume that the maximal time of existence $T^{\ast}>0$ of the corresponding solution $u$ to \eqref{PVI} is finite, then
%\begin{equation}\label{Bup}
%\limsup_{t\rightarrow T^{\ast}}\|u(t)\|_{\dot H^{s_c}}=+\infty.
%\end{equation}
%Moreover, if we assume that
%\begin{align}\label{esttl}
%\left\|\nabla u(t)\right\|_{L^2}\geq \frac{c}{(T^{\ast}-t)^{\frac{1-s_c}{2}}},
%\end{align}
%for some constant $c=c(N,\sigma)$, then there exists $\gamma=\gamma(N,\sigma,b)>0$ such that
%	\begin{align}\label{rate}
%	c\|u(t)\|_{\dot H^{s_c}}\geq \|u(t)\|_{L^{\sigma_c}}\geq |\log (T^{\ast}-t)|^{\gamma},\,\,\,\,as\,\,t\to T^{\ast}.
%	\end{align}
%\end{thm} 
\begin{thm}\label{scteo2}
	Let $\sigma_c=\frac{2N\sigma}{2-b}$ such that $\dot{H}^{s_c} \subset L^{\sigma_c}$. Assume $N\geq3$, $0<b<\min\{\frac{N}{2},2\}$ and $\frac{2-b}{N}<\sigma<\min\left\{\frac{2-b}{N-2},\frac{2}{N}\right\}$. Given $u_0\in \dot H^{s_c}\cap \dot H^1$ so that the maximal time of existence $T^{\ast}>0$ of the corresponding solution $u$ to \eqref{PVI} is finite and satisfies %either 
%\begin{align}\label{finitegrad}
%\sup_{t\in [0,T^{\ast})}\left\|\nabla u(t)\right\|_{L^2} \,\,\, \textit{is finite}
%\end{align}
%or
\begin{align}\label{esttl}
\left\|\nabla u(t)\right\|_{L^2}\geq \frac{c}{(T^{\ast}-t)^{\frac{1-s_c}{2}}},
\end{align}
for some constant $c=c(N,\sigma)$ and $t$ close enough to $T^{\ast}$. Then there exists $\gamma=\gamma(N,\sigma,b)>0$ such that
	\begin{align}\label{rate}
	c\|u(t)\|_{\dot H^{s_c}}\geq \|u(t)\|_{L^{\sigma_c}}\geq |\log (T^{\ast}-t)|^{\gamma},\,\,\,\,as\,\,t\to T^{\ast}.
	\end{align}
\end{thm}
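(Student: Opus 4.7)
The first inequality $c\|u(t)\|_{\dot H^{s_c}}\geq\|u(t)\|_{L^{\sigma_c}}$ is the Sobolev embedding $\dot H^{s_c}\hookrightarrow L^{\sigma_c}$, so the entire content of the theorem is the second (logarithmic) lower bound. The plan is to adapt the scheme of Merle--Rapha\"el \cite{MR_Bsc} for the radial mass-supercritical NLS to the present non-radial INLS setting. For $t$ close to $T^{\ast}$ I would set $\lambda(t)=(T^{\ast}-t)^{1/2}$ and consider the renormalised solution
\[
v_t(y,s)=\lambda^{(2-b)/(2\sigma)}\,u(\lambda y,\,t+\lambda^2 s),\qquad s\in[0,1).
\]
By the scale invariance of \eqref{PVI}, $v_t$ solves the same equation on $[0,1)$ and blows up exactly at $s=1$, with $\|v_t(0)\|_{L^{\sigma_c}}=\|u(t)\|_{L^{\sigma_c}}$ and, by \eqref{esttl}, $\|\nabla v_t(0)\|_{L^2}\geq c>0$ uniformly in $t$. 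The theorem then reduces to an unconditional statement about solutions of \eqref{PVI} which blow up on the unit time interval from data of unit $\dot H^1$ size: the smaller the critical $L^{\sigma_c}$ norm of the datum, the longer such a solution must survive.

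The core step is a quantitative Cauchy theory. Starting from the Strichartz framework of \cite{CFG20} and the fractional calculus needed to handle $|x|^{-b}|w|^{2\sigma}w$, I would establish the existence of universal constants $C_0,\alpha>0$ such that, for any $\dot H^{s_c}\cap\dot H^1$ solution $w$ of \eqref{PVI} with $\|w(0)\|_{L^{\sigma_c}}\leq K$ and $\|\nabla w(0)\|_{L^2}\lesssim 1$, the solution persists on an interval $[0,\tau(K)]$ with the critical Strichartz norm bounded by $2K$, and with $\tau(K)$ decaying only slowly in $K$. Combining this persistence estimate with a dyadic iteration on doubling levels of the critical norm, and matching the total iterated existence time against the constraint that $v_t$ must cease to exist by $s=1$, should yield a lower bound of the form $K\gtrsim|\log(T^{\ast}-t)|^{\gamma}$ for some $\gamma=\gamma(N,\sigma,b)>0$ after a book-keeping argument in the spirit of \cite{MR_Bsc}. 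Undoing the rescaling produces \eqref{rate}.

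The principal obstacle is the quantitative persistence step in the non-radial setting. In the radial analysis of \cite{CF20} the radial Sobolev embedding provides the pointwise decay needed to close the nonlinear estimates for the singular weight $|x|^{-b}$; this tool is unavailable here and must be replaced by a careful split $|x|^{-b}=|x|^{-b}\chi_R+|x|^{-b}(1-\chi_R)$ of the weight, in the spirit of Lemma \ref{lemintradial} and of the non-radial INLS works \cite{BL21,GM21}, handling the inner piece via Hardy--Sobolev type inequalities and the outer piece via the decay of $|x|^{-b}$ itself. A secondary difficulty is the precise book-keeping needed to extract the final exponent $\gamma$ from the exponent produced by the nonlinear estimate; both features are where the present non-radial argument must depart substantially from the radial treatment in \cite{CF20}.
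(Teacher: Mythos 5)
There is a genuine gap, and it sits exactly at the step you call the core step. A quantitative Cauchy theory of the form you propose --- an existence time $\tau(K)$ bounded below by a slowly decaying function of the critical norm $\|w(0)\|_{L^{\sigma_c}}\leq K$ alone --- does not exist and cannot be extracted from the Strichartz framework of \cite{CFG20}. The $L^{\sigma_c}$ norm is scaling invariant, so no function of its size can control the local existence time: any datum above the small-data threshold can be rescaled to data with the \emph{same} critical norm blowing up in arbitrarily short time. Critical local theory yields existence times depending on the profile of the datum (through the smallness of the free evolution in a Strichartz norm on short intervals), not on its critical size; this is precisely the obstruction that motivates the entire Merle--Rapha\"el machinery in \cite{MR_Bsc}. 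A secondary problem is your reduction itself: the rescaling together with \eqref{esttl} gives only the \emph{lower} bound $\|\nabla v_t(0)\|_{L^2}\geq c$, whereas the hypothesis $\|\nabla w(0)\|_{L^2}\lesssim 1$ required by your persistence lemma is an upper bound, and $(T^{\ast}-t)^{\frac{1-s_c}{2}}\|\nabla u(t)\|_{L^2}$ need not be bounded above as $t\to T^{\ast}$.

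The paper's proof takes a different architecture, following \cite{MR_Bsc} and \cite{CF20}. The two inputs are the localized virial inequality of Lemma \ref{lemintradial} and the non-radial Gagliardo--Nirenberg inequality of Lemma \ref{lemaradialGN}, phrased in terms of the Morrey--Campanato seminorm \eqref{defrho}. These yield Proposition \ref{prop1} (the uniform control \eqref{prop1ii} of $\rho(v(\tau_0),M_0^{\alpha_1}\sqrt{\tau_0})$ and the averaged dispersive bound \eqref{prop1i}) and then Proposition \ref{prop2}, a lower bound \eqref{tprop2} on the $L^2$ mass of the datum in a ball of radius $D_*\lambda_v(\tau_0)$. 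The logarithmic bound \eqref{rate} is then obtained, as in \cite[Theorem 1.2]{CF20}, by applying Proposition \ref{prop2} along a geometric sequence of times approaching $T^{\ast}$, producing mass concentration on essentially disjoint dyadic annuli whose number grows like $|\log(T^{\ast}-t)|$; summing these contributions against \eqref{radial1}, which controls localized $L^2$ mass by $\|u(t)\|_{L^{\sigma_c}}^2$, and balancing against the $M_0$-dependence of the constants produces the exponent $\gamma$. Your intuition that the decay of $|x|^{-b}$ must replace the radial Sobolev embedding is correct and is indeed how Lemmas \ref{lemintradial} and \ref{lemaradialGN} are proved; but the structure built on top of them is mass concentration plus a counting argument over annuli, not an iterated local-existence argument.
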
 

The condition \eqref{esttl} is very natural and it is easily deduced if a local Cauchy theory in $\dot{H}^{1}$ is available (see \citet{CaWe89} and also the Introduction in \citet{MR_Bsc} for the argument in the NLS case). Moreover, if we additionally assume that $u_0\in H^1\subset \dot H^{s_c}\cap \dot H^1$, it is automatically satisfied as we mentioned before.

As a consequence of the previous theorem and the well-posedness theory in $\dot H^{s_c}\cap \dot H^1$ obtained in \cite{CFG20} we deduce that the $\dot{H}^{s_c}$-norm of the blow-up solution cannot be uniformly bounded.
\begin{coro}\label{cor13}
Assume $N\geq3$, $0<b<\min\{\frac{N}{2},2\}$ and $\frac{2-b}{N}<\sigma<\min\left\{\frac{2-b}{N-2},\frac{2}{N}\right\}$. Given $u_0\in \dot H^{s_c}\cap \dot H^1$ so that the maximal time of existence $T^{\ast}>0$ of the corresponding solution $u$ to \eqref{PVI} is finite, then
\begin{equation}\label{Bup}
\limsup_{t\rightarrow T^{\ast}}\|u(t)\|_{\dot H^{s_c}}=+\infty.
\end{equation}
\end{coro}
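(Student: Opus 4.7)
The plan is to argue by contradiction using Theorem \ref{scteo2}. Suppose toward a contradiction that
\[
M:=\sup_{t\in [0,T^{\ast})}\|u(t)\|_{\dot H^{s_c}}<+\infty.
\]
The strategy is to first show that this standing assumption, together with the local Cauchy theory in $\dot H^{s_c}\cap \dot H^1$ from \cite{CFG20}, forces the lower bound \eqref{esttl} on $\|\nabla u(t)\|_{L^2}$. Once \eqref{esttl} is available, Theorem \ref{scteo2} immediately gives
\[
c\|u(t)\|_{\dot H^{s_c}}\geq |\log(T^{\ast}-t)|^{\gamma}\longrightarrow +\infty\quad\text{as }t\to T^{\ast},
\]
which contradicts the uniform bound by $M$ and proves the corollary.

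To produce \eqref{esttl}, I would run the Cazenave--Weissler persistence argument alluded to right after Theorem \ref{scteo2}. Fix $t_0\in [0,T^{\ast})$ and, taking advantage of the scaling $u_{\lambda}(x,t)=\lambda^{(2-b)/(2\sigma)}u(\lambda x,\lambda^2 t)$ recorded in the introduction, set $\lambda=\|\nabla u(t_0)\|_{L^2}^{-1/(1-s_c)}$. Consider the rescaled solution $v(x,t)=\lambda^{(2-b)/(2\sigma)}u(\lambda x,\,t_0+\lambda^2 t)$; by construction $\|\nabla v(0)\|_{L^2}=1$ and, because the $\dot H^{s_c}$ norm is scale-invariant, $\|v(0)\|_{\dot H^{s_c}}=\|u(t_0)\|_{\dot H^{s_c}}\leq M$. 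The local well-posedness in $\dot H^{s_c}\cap\dot H^1$ from \cite{CFG20} then yields an existence time $\tau=\tau(M,N,\sigma,b)>0$ for $v$ that is \emph{independent of $t_0$}. Undoing the scaling, $u$ is defined on $[t_0,t_0+\lambda^2\tau]$, so
\[
T^{\ast}-t_0\geq \tau\,\|\nabla u(t_0)\|_{L^2}^{-2/(1-s_c)},
\]
which rearranges to \eqref{esttl} with constant $c=\tau^{(1-s_c)/2}$.

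The one step that deserves care is checking that the existence time furnished by the well-posedness theorem in \cite{CFG20} truly depends only on a bound for $\|v(0)\|_{\dot H^{s_c}}$ once the $\dot H^1$ component of the initial data is normalized to unit size. This is exactly the intercritical persistence property that makes the scaling argument work, and in the INLS setting it reduces to a Strichartz bookkeeping that accounts for the inhomogeneous weight $|x|^{-b}$. Modulo that structural check, the combination of the rescaling step with Theorem \ref{scteo2} closes the argument.
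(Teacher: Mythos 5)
Your proposal is correct and follows essentially the same route as the paper, which (deferring to \cite[Corollary 1.3]{CF20} and ultimately to Merle--Rapha\"el) also argues by contradiction: a uniform bound on $\|u(t)\|_{\dot H^{s_c}}$ plus the scaling-invariant local theory in $\dot H^{s_c}\cap\dot H^1$ of \cite{CFG20} yields the lower bound \eqref{esttl}, after which Theorem \ref{scteo2} forces the logarithmic divergence of the critical norm. The one point you flag --- that the existence time from \cite{CFG20} depends only on the size of the data in $\dot H^{s_c}\cap\dot H^1$ --- is exactly the property the authors invoke, so your argument is complete.
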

Note that the previous result does not hold in the mass-critical case ($s_c = 0$), since the scaling invariant $L^2$-norm is preserved by the mass conservation \eqref{Mass}. 

It should be emphasized that all the above results are still unknown for the classical NLS equation in the non-radial case. In our proofs we need the restriction $\frac{2-b}{N}<\sigma<\min\left\{\frac{2-b}{N-2},\frac{2}{N}\right\}$, which implies that the argument cannot be extended to the NLS setting since $b=0$ in this case. Moreover, if $b>4/N$, then $\frac{2-b}{N-2}<\frac{2}{N}$ and the result covers all the intercritical range \eqref{IR}. Finally, the assumption $N\geq 3$ can probably be relaxed and we assume this condition due to the local theory in $\dot H^{s_c}\cap \dot H^1$ obtained in \cite{CFG20}.

This paper is organized as follows. In Section \ref{sec3}, we prove a virial type estimate and use it to prove Theorem \ref{thmblowdisper}. The proof of the concentration result Theorem \ref{Tlpc} is also presented in this section. A non-radial interpolation estimate based on a Morrey-Campanato type semi-norm is presented in Section \ref{sec4} (see Lemma \ref{lemaradialGN}). The non-radial interpolation estimate and the virial type estimate are the main tools to obtain the existence of blow-up solutions and also a lower bound for the blow-up rate.

\section{Blow-up solutions in $H^1$}\label{sec3}

\subsection{A virial type estimate}

Let $\phi$ be a non-negative radial function $\phi\in C^{\infty}_0(\Real^N)$, such that
\begin{align}\label{phi}
	\phi(x)=
	\left\{
	\begin{array}{ll}
		|x|^2,&\mbox{ se }|x|\leq 2\\
		0,&\mbox{ se }|x|\geq 4
	\end{array}
	\right.
\end{align}
satisfying
\begin{align}\label{nablaphi}
	\phi(x)\leq c|x|^2, \quad |\nabla \phi (x)|^2\leq c\phi(x) \quad \mbox{and} \quad \partial_r^2\phi(x)\leq 2, \quad \mbox{for all } x\in \Real^N,
\end{align}
with $r=|x|$. Then, define $\phi_R(x)=R^2\phi\left(\frac{x}{R}\right)$. %(see Appendix \ref{appB} for the construction of such function $\phi$). 
Consider $u\in C([0,\tau_*],\dot H^{s_c}\cap \dot{H}^1)$ a solution to \eqref{PVI}. For any $R>0$ and $t\in [0,\tau_*]$ define the function
\begin{align}\label{virial}
	z_R(t)=\displaystyle\int\phi_R|u(t)|^2\,dx.
\end{align} 

From direct computations (see, for instance, Proposition 7.2 in \cite{FG20}), we obtain
\begin{equation}\label{zR'2}
	z_R'(t)=2\mbox{Im}\int \nabla\phi_R\cdot\nabla u(t)\overline{u}(t)\,dx
\end{equation}
and 
\begin{align}\label{zR''}
	z_R''(t)=&4\mbox{Re} \sum_{j,k=1}^{N}\int \partial_ju(t)\,\partial_k\overline u(t)\,\partial^2_{jk}\phi_R\,dx-\int |u(t)|^2 \Delta^2\phi_R\nonumber\\
	&-\frac{2\sigma}{\sigma+1}\int|x|^{-b}|u(t)|^{2\sigma+2}\Delta\phi_R\,dx+\frac{2}{\sigma+1}\int\nabla\left(|x|^{-b}\right)\cdot \nabla\phi_R|u(t)|^{2\sigma+2}\,dx.
\end{align}

From the Cauchy-Schwarz inequality and mass conservation \eqref{Mass}, we have
\begin{equation}\label{zR-bound}
z_R(t)\leq cR^2\|u_0\|^2_{L^2}
\end{equation}
and
\begin{equation}\label{zR'-bound}
z'_R(t)\leq cR\|\nabla u(t)\|_{L^2}\|u_0\|_{L^2}.
\end{equation}

On the other hand, since $\phi$ is radial, we have
\begin{align}\label{virial1}
	z'(t)=2\,\mbox{Im}\int \partial_r\phi_R\frac{x\cdot \nabla u(t)}{r}\overline{u}(t)\,dx
\end{align}
and 
\begin{align}\label{virial2}
	z_R ''(t)=&4\int \frac{\partial_r\phi_R}{r}|\nabla u(t)|^2\,dx+4\int \left(\frac{\partial_r^2\phi_R}{r^{2}}-\frac{\partial_r \phi_R}{r^3}\right)|x\cdot \nabla u(t)|^2\,dx-\int|u(t)|^2 \Delta^2\phi_R
	\,dx \nonumber\\
	&-\frac{2\sigma}{\sigma+1}\int \left[\partial^2_r\phi_R +\left(N-1+\frac{b}{\sigma}\right)\frac{\partial_r \phi_R}{r}\right]|x|^{-b}|u(t)|^{2\sigma+2}\,dx,
\end{align}
where $\partial_r$ denotes the derivative with respect to $r=|x|$.

Consider the following functional
\begin{align}
	P[u]&=\int |\nabla u|^{2}\,dx-\frac{N\sigma+b}{2\sigma+2}\int |x|^{-b}|u|^{p+1}\,dx,\label{P}
\end{align}
then can rewrite $z_R''(t)$ in \eqref{virial2} as
\begin{align}
	z_R''(t)=8P[u(t)]+K_1+K_2+K_3,
\end{align}
where
\begin{align}
	%P(u)=&\|\nabla u\|_{L^2}^2-\frac{N(2\sigma)+2b}{2(2\sigma+2)}\int |x|^{-b}|u|^{2\sigma+2}\,dx,\\
	K_1=&4\int \left(\frac{\partial_r\phi_R}{r}-2\right)|\nabla u(t)|^2\,dx+4\int \left(\frac{\partial^2_r\phi_R}{r^2}-\frac{\partial_r \phi_R}{r^3}\right)|x\cdot \nabla u(t)|^2\,dx,\\
	K_2=&-\frac{2\sigma}{\sigma+1}\int\left[\partial^2_r\phi_R+\left(N+1-\frac{b}{\sigma}\right)\frac{\partial_r \phi_R}{r}-2N-\frac{2b}{\sigma}\right]|x|^{-b}|u(t)|^{2\sigma+2}\,dx,\\
	%&+\frac{4\left(N\sigma+b\right)}{\sigma+1}\int|x|^{-b}|u|^{2\sigma+2}\,dx,\\
	K_3=&-\int|u(t)|^2\Delta^2\phi_R\,dx.
\end{align}
We claim that there exists $c>0$ such that
\begin{align}
	z_R''(t)\leq& 8P[u(t)]+c\left(\frac{1}{R^2}\int_{2R\leq |x|\leq 4R}|u(t)|^2\,dx+\int_{|x|\geq R}|x|^{-b}|u(t)|^{2\sigma+2}\,dx\right)\label{zR8Q}.
\end{align}
Indeed, we first show that $K_1\leq 0$. To this end, we consider the following region in $\mathbb R^N$
\begin{align}
	\Omega=\left\{x\in \mathbb R^N;\,\,\frac{\partial^2_r\phi_R(x)}{r^2}-\frac{\partial_r\phi_R(x)}{r^3}\leq 0\right\}.
\end{align}
%and 
%\begin{align}
%	\mathbb R^N\backslash\Omega=\left\{x\in \mathbb R^N;\,\,\frac{\partial^2_r\phi_R(x)}{r^2}-\frac{\partial_r\phi_R(x)}{r^3}> 0\right\}.
%\end{align}
Since $\partial^2_r\phi_R\leq 2$ it follows that $\partial_r\phi_R(|x|)\leq 2|x|$, for all $x\in \mathbb R^N$. Now, splitting the integration and using the Cauchy-Schwartz inequality, we get
\begin{align}
	K_1=&\,\,4\int\left(\frac{\partial_r\phi_R}{r}-2\right)|\nabla u(t)|^2\,dx+4\int\left(\frac{\partial_r^2\phi_R}{r^2}-\frac{\partial_r\phi_R}{r^3}\right) |x\cdot \nabla u(t)|^2\,dx\nonumber\\
	=&\,\,4\int_{\Omega} \left(\frac{\partial_r\phi_R}{r}-2\right)|\nabla u(t)|^2\,dx+4\int_{\Omega} \left(\frac{\partial_r^2\phi_R}{r^2}-\frac{\partial_r\phi_R}{r^3}\right) |x\cdot \nabla u(t)|^2\,dx\nonumber\\
	&+4\int_{\mathbb R^N\backslash\Omega} \left(\frac{\partial_r\phi_R}{r}-2\right)|\nabla u(t)|^2\,dx+4\int_{\mathbb R^N\backslash\Omega} \left(\frac{\partial_r^2\phi_R}{r^2}-\frac{\partial_r\phi_R}{r^3}\right) |x\cdot \nabla u(t)|^2\,dx\nonumber\\
	\leq&\,\, 4\int_{\mathbb R^N\backslash\Omega} \left(\frac{\partial_r\phi_R}{r}-2\right)|\nabla u(t)|^2\,dx+4\int_{\mathbb R^N\backslash\Omega} \left(\partial_r^2\phi_R-\frac{\partial_r\phi_R}{r}\right)\frac{|x|^2}{r^2}|\nabla u(t)|^2\,dx\nonumber\\
	=&\,\, 4\int \left(\partial_r^2\phi_R-2\right)|\nabla u(t)|^2\,dx\leq 0,\label{K_1}
\end{align}
where in the last inequality we have used the assumption \eqref{nablaphi}.

To estimate $K_2$, first note that if $0\leq |x|\leq 2R$, then
\begin{align}
	\partial_r\phi_R(x)=2|x|=2r,\,\,\,\,\,\,\partial^2_r\phi_R(x)=2,
\end{align}
which implies
\begin{align}
	\partial^2_r\phi_R+\left(N-1+\frac{b}{\sigma}\right)\frac{\partial_r\phi_R}{r}-2N-\frac{2b}{\sigma}=0, \quad \mbox{for} \quad 0\leq |x|\leq 2R.
\end{align}
Hence,
\begin{align}
	\mbox{supp} \left[\partial^2_r\phi_R+\left(N-1+\frac{b}{\sigma}\right)\frac{\partial_r\phi_R}{r}-2N-\frac{2b}{\sigma}\right]\subset (2R,\infty),
\end{align}
and thus, there exists $c>0$ such that
\begin{align}
	K_2\leq& c\int_{|x|\geq R}|x|^{-b}|u(t)|^{2\sigma+2}\,dx\label{K_2}.
\end{align}
Moreover, again by definition of $\phi_R$, we also conclude that 
\begin{align}
	K_3\leq \frac{c}{R^{2}}\int_{2R\leq |x|\leq 4R} |u(t)|^{2}\,dx\label{K_3}.
\end{align}
Collecting \eqref{K_1}, \eqref{K_2} and \eqref{K_3} we deduce \eqref{zR8Q}.

Next, from the energy conservation \eqref{Energy}, we can write
\begin{align}
	P[u(t)]= -\sigma s_c\|\nabla u(t)\|_{L^2}^2+2(\sigma s_c+1)E[u_0].
\end{align}
Then, combining the last identity with the inequality \eqref{zR8Q}, we deduce the following lemma.
%\begin{align}
%	8\sigma s_c\|\nabla u(t)\|_{L^2}^2+z_R''(t)-16(\sigma s_c+1)E[u_0]\leq c\left(\frac{1}{R^2}\int_{2R\leq |x|\leq 4R}|u(t)|^2\,dx+\int_{|x|\geq R}|x|^{-b}|u(t)|^{2\sigma+2}\,dx\right).
%\end{align}
%For future reference we state this result in the next lemma.
\begin{lemma}\label{lemintradial}
	Let $N\geq3$, $0<b<\min\{\frac{N}{2},2\}$  and $\frac{2-b}{N}<\sigma<\min\left\{\frac{2-b}{N-2},\frac{2}{N}\right\}$. If $u\in C([0,\tau_*]: \dot H^{s_c}\cap \dot H^1 )$ is a solution to \eqref{PVI} with initial data $u(0)=u_0$, then there exists $c>0$ depending only on $N, \sigma, b$ such that for all $R>0$ and $t\in[0, \tau_\ast]$ we have
	\begin{align}
	8\sigma s_c\|\nabla u(t)\|^2_{L^2}+z''_R(t)&-16(\sigma s_c+1)E[u_0]\leq c\left(\frac{1}{R^2}\int_{2R\leq |x|\leq 4R}|u(t)|^2\,dx+\int_{|x|\geq R}|x|^{-b}|u(t)|^{2\sigma+2}\,dx\right).\label{estintradial}
	\end{align}
\end{lemma}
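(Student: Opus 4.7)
The estimate \eqref{estintradial} is essentially a packaging of the identities already developed in the preceding paragraphs, so my plan is to collect those pieces and combine them with the energy conservation law. Starting from \eqref{virial2}, I would rewrite the second derivative of the localized virial quantity as
$$z_R''(t) = 8P[u(t)] + K_1 + K_2 + K_3,$$
where $P[u]$ is the functional in \eqref{P}. Each $K_j$ is designed to vanish on $\{|x|\leq 2R\}$, since $\phi_R(x)=|x|^2$ there: $K_1$ measures the departure of the Hessian of $\phi_R$ from that of $|x|^2$, $K_2$ collects the corresponding nonlinear error, and $K_3$ is the bi-Laplacian term.

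The first key step is to show $K_1\leq 0$. I would split $\mathbb{R}^N$ into $\Omega=\{x:\partial_r^2\phi_R/r^2 - \partial_r\phi_R/r^3\leq 0\}$ and its complement. On $\Omega$, both summands of $K_1$ are non-positive: the coefficient $\partial_r\phi_R/r - 2$ is non-positive because $\partial_r\phi_R(r)\leq 2r$ (an integrated form of $\partial_r^2\phi\leq 2$), and the second integrand is non-positive by definition of $\Omega$. On $\mathbb{R}^N\setminus\Omega$, I would use the elementary bound $|x\cdot\nabla u|^2\leq r^2|\nabla u|^2$ to combine the two integrands into a single term $(\partial_r^2\phi_R - 2)|\nabla u|^2$, which is again non-positive by \eqref{nablaphi}. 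The remaining terms $K_2$ and $K_3$ are more routine: since $\phi_R=|x|^2$ for $|x|\leq 2R$, one checks directly that the bracket in $K_2$ is supported on $\{|x|\geq 2R\}$, yielding $K_2\leq c\int_{|x|\geq R}|x|^{-b}|u|^{2\sigma+2}\,dx$, while $\operatorname{supp}\Delta^2\phi_R\subset\{2R\leq |x|\leq 4R\}$ together with the scaling bound $\|\Delta^2\phi_R\|_{L^\infty}\leq c/R^2$ gives the bound on $K_3$. Putting these three estimates together produces \eqref{zR8Q}.

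To finish, I would invoke energy conservation \eqref{Energy} to eliminate the nonlinear term from $P[u(t)]$. Solving \eqref{Energy} for $\int|x|^{-b}|u|^{2\sigma+2}$ and substituting into the definition of $P[u(t)]$, the coefficient in front of $\|\nabla u(t)\|_{L^2}^2$ collapses to $1 - (N\sigma+b)/2 = -\sigma s_c$ thanks to the identity $2\sigma s_c = N\sigma - 2 + b$ coming from the definition of the critical Sobolev index, while the constant term becomes $2(\sigma s_c+1)E[u_0]$. Thus $P[u(t)] = -\sigma s_c\|\nabla u(t)\|_{L^2}^2 + 2(\sigma s_c+1)E[u_0]$; inserting this into \eqref{zR8Q} and moving terms to the left-hand side yields \eqref{estintradial}. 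The only genuinely delicate point in the whole argument is the sign of $K_1$, where the two integrals have opposite signs and neither dominates the other pointwise; the case splitting on $\Omega$ together with the crucial pointwise assumption $\partial_r^2\phi\leq 2$ is what rescues the non-positivity.
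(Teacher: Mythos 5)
Your proposal is correct and follows essentially the same route as the paper: the decomposition $z_R''=8P[u]+K_1+K_2+K_3$, the sign argument for $K_1$ via the region $\Omega$ and the pointwise bound $|x\cdot\nabla u|^2\leq r^2|\nabla u|^2$, the support observations for $K_2$ and $K_3$, and the substitution $P[u(t)]=-\sigma s_c\|\nabla u(t)\|_{L^2}^2+2(\sigma s_c+1)E[u_0]$ from energy conservation. All the identities you invoke (in particular $2\sigma s_c=N\sigma-2+b$ and $N\sigma+b=2(\sigma s_c+1)$) check out, so no changes are needed.
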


\subsection{Upper bound for the blow-up rate}
%Now, we are able to prove Theorem \ref{thmblowdisper}. This is an extension of the result obtained by Merle, Rapha\"{e}l and Szeftel \cite[Theorem 1.1]{MRS2014} for the classical NLS equation to the INLS model and is a direct consequence of Lemma \ref{lemintradial}. 

As an application of Lemma \ref{lemintradial} we obtain our first main result.

\begin{proof}[Proof of Theorem \ref{thmblowdisper}] Let $R,\varepsilon>0$ real numbers to be chosen later. First, using  interpolation and Sobolev embedding, or just the Gagliardo-Nirenberg inequality (see, for instance, Weinstein \cite[inequality $(I.2)$]{W83}), we have\footnote{This is the step where we explore the decaying factor in the nonlinearity instead of the radial assumption employed by Merle, Rapha\"{e}l and Szeftel \cite[Theorem 1.1]{MRS2014}.}
\begin{align}\label{RegGN}
	\int_{|x|\geq R}|x|^{-b}|u(t)|^{2\sigma+2}\,dx&\leq c\frac{1}{R^b}\|u(t)\|_{L^{\frac{2N}{N-2}}}^{\sigma N}\|u(t)\|_{L^2}^{2\sigma+2-\sigma N}\\
	&\leq c\frac{1}{R^b}\|\nabla u(t)\|_{L^2}^{\sigma N}\|u(t)\|_{L^2}^{2\sigma+2-\sigma N}.
\end{align}
Then, given $\varepsilon>0$, by the Young inequality and mass conservation \eqref{Mass} we conclude, for some constant $C_{\varepsilon}>0$, that
	\begin{align}
	\int_{|x|\geq R}|x|^{-b}|u(t)|^{2\sigma+2}\,dx&\leq \frac{c}{R^{b}}\|\nabla u\|_{L^{2}}^{\sigma N}\|u_0\|^{2\sigma+2-\sigma N}_{L^2} \leq \varepsilon \|\nabla u\|_{L^2}^2+C_{\varepsilon}\frac{\|u_0\|_{L^2}^{\frac{2(2\sigma+2-\sigma N)}{2-\sigma N}}}{R^{\frac{2b}{2-\sigma N}}},\label{case2}
	\end{align}
where we have used that $\sigma <2/N$.

Combining the inequality \eqref{estintradial}, energy and mass conservation \eqref{Energy}-\eqref{Mass} and the inequality \eqref{case2} we deduce that
	\begin{align}
	8\sigma s_c\int |\nabla u(t)|^2
	\,dx+z''_R(t)&\leq C_{u_0}\left(
	1+\frac{1}{R^2}+\int_{|x|\geq R}|x|^{-b}|u(t)|^{2\sigma+2}\,dx\right)\nonumber
	\\
	&\leq C_{u_0}\left(1+\frac{1}{R^2}+\varepsilon\int|\nabla u(t)|^2\,dx+\frac{C_{\varepsilon}}{R^{\frac{2b}{2-\sigma N}}}\right).
	\end{align}
Fix $\varepsilon>0$ small enough such that $8\sigma s_c-\varepsilon C_{u_0}>\sigma s_c$. Therefore, there exists a universal constant $C=C(u_0, \sigma, b, N)>0$ such that
\begin{align}
	\sigma s_c\int|\nabla u(t)|^2\,dx+z''_R(t)\leq C\left(1+\frac{1}{R^2}+\frac{1}{R^{\frac{2b}{2-\sigma N}}}\right).
	\end{align}
Now, since $\frac{2b}{2-\sigma N}>2$ (or $\sigma>\frac{2-b}{N}$) , if $R\ll1$, then
	\begin{align}\label{ctR}
	\sigma s_c\int|\nabla u(t)|^2\,dx+z''_R(t)\leq \frac{C}{R^{\frac{2b}{2-\sigma N}}}.
	\end{align}
	
The rest of the argument is as in the proof of Theorem 1.1 in Merle, Rapha\"{e}l and Szeftel \cite{MRS2014} (see also Theorem 1.4. in \cite{CF20}).

\end{proof}

\subsection{$L^{\sigma_c}$-norm concentration}
In this subsection we prove Theorem \ref{Tlpc}. We start by introducing some notation. Let $\phi \in C^{\infty}_0(\Real^N)$ be a positive radial cut-off solution such that
$$
\phi(x)=
\left\{
\begin{array}{ll}
	1&,\,\mbox{ if } |x|\leq 1,\\
	0&,\,\mbox{ if } |x|\geq 2.
\end{array}
\right.
$$
For $R(t)>0$, define the inner and outer spatial localizations of $u(x,t)$ at radius $R(t)$ as
$$u_1(x,t)=\phi\left(\frac{x}{R(t)}\right)u(x,t)\,\,\,\,\mbox{ and }\,\,\,\,u_2(x,t)=\left(1-\phi\left(\frac{x}{R(t)}\right)\right)u(x,t
).$$
Moreover, let $\chi \in C^{\infty}_0(\Real^N)$ be a radial function such that $\chi(x)=0$ for $|x|\geq 1$ and $\widehat{\chi}(0)=1$. For $\rho(t)$, define the inner and outer frequency localizations at radius $\rho(t)$ of $u_1(x,t)$ as $$\widehat{u}_{1L}(\xi,t)=\widehat{\chi}\left(\frac{\xi}{\rho(t)}\right)\widehat{u}_1(\xi,t)\,\,\,\,\mbox{ and }\,\,\,\,\widehat{u}_{1H}(\xi,t)=\left(1-\widehat{\chi}\left(\frac{\xi}{\rho(t)}\right)\right)\widehat{u}_1(\xi,t).$$
It is clear that
\begin{equation}\label{u1H4}
\|\nabla {u}_{1L}(t)\|_{L^2}\leq c\|\nabla {u}_{1}(t)\|_{L^2} \quad \mbox{ and }\quad \|\nabla {u}_{1H}(t)\|_{L^2}\leq c\|\nabla {u}_{1}(t)\|_{L^2}.
\end{equation}
%
%\begin{prop}\label{Plpc} Let $N\geq3$, $0<b<\min\{\frac{N}{2},2\}$  and $\frac{2-b}{N}<\sigma<\min\left\{\frac{2-b}{N-2},\frac{2}{N}\right\}$. Given $u_0\in H^1 $ so that the maximal time of existence $T^{\ast}>0$ for the corresponding solution $u\in C([0,T^{\ast}):H^1 )$ of \eqref{PVI} is finite. Then there exist positive constants $c_1$ and $c_2$ such that the following holds true. Define 
%\begin{equation}\label{R(t)}
%R(t)=c_1\frac{\|u_0\|_{L^2}^{\frac{2\sigma+2-\sigma N}{b}}}{\|\nabla u(t)\|^{\frac{2-\sigma N}{b}}_{L^2}},\,\,\,\,\,\,\,\mbox{    }\,\,\,\,\,\,\,\rho(t)=c_2\|\nabla u(t)\|_{L^2}^{\frac{1}{1-s_c}},
%\end{equation}
%and consider the decomposition $u=u_{1L}+u_{1H}+u_2$ described above.
%	Then,
%	\begin{itemize}
%		\item[(i)] There exist an absolute constant $c>0$ such that
%		\begin{equation}\label{lmtabove}
%			\left\|u_{1L}\right\|_{L^{\sigma_c}}\geq c\,\,\,\,\mbox{ as } t\to T^{\ast}.
%		\end{equation}
%		\item[(ii)] Suppose that there are constants $c^*$ and $\gamma > \sigma_c$ such that $\|u_1(t)\|_{L^{\gamma}}\leq c^*$ for all $t$ close enough to $T^{\ast}$. Then,
%		\begin{equation}\label{itemii}
%			\left\|u_{1}\right\|_{L^{\sigma_c}\left(|x-x_0(t)|\leq \rho(t)^{-1}\right)}\geq \frac{c}{(c^*)^{\frac{\gamma N}{N(2\sigma+2)-\sigma_c(N-b)}}}\,\,\,\,\mbox{ as } t\to T^{\ast}
%		\end{equation}
%		for some constant $c>0$, where $x_0(t)$ is a positive function such that $$|x_0(t)|\,\rho(t)\leq c(c^*)^{\frac{\gamma}{N-1}+\frac{\sigma_c\gamma(\gamma-2)}{(N-1)(N(2\sigma+2)-\sigma_c(N-b))}}.$$
%	\end{itemize}
%\end{prop}
\begin{proof}[Proof of Theorem \ref{Tlpc}] 
First, from \eqref{GradBU1} and the energy conservation \eqref{Energy} we deduce
$$
\lim_{t\to T^{\ast}}\frac{\int |x|^{-b}|u(t)|^{2\sigma+2}\,dx}{\|\nabla u(t)\|^2_{L^2}}=\sigma + 1.
$$ 
Hence, for $t$ close to $T^{\ast}$, we have
 \begin{align}\label{equ}
		\|\nabla u(t)\|_{L^2}^{2}&\leq  \int |x|^{-b}|u(t)|^{2\sigma+2}\,dx\nonumber\\
		&\leq \int |x|^{-b}|u_{1L}(t)|^{2\sigma+2}\,dx+\int |x|^{-b}|u_{1H}(t)|^{2\sigma+2}\,dx+\int |x|^{-b}|u_{2}(t)|^{2\sigma+2}\,dx,
	\end{align}
where in the last line we have used that $u=u_{1L}+u_{1H}+u_2$ with pairwise disjoint supports.
	
For a constant $a_1>0$, define
\begin{equation}\label{R(t)}
R(t)=a_1\frac{\max\left\{\|u_0\|_{L^2}, \|u_0\|_{L^2}^{\frac{2\sigma+2-\sigma N}{b}}\right\}}{\|\nabla u(t)\|^{\frac{2-\sigma N}{b}}_{L^2}}.
\end{equation}
Applying \eqref{RegGN}, we obtain\footnote{In \citet[Theorem 1.2]{HR07}, the authors used in this part the radial Gagliardo-Nirenberg estimate 
$$
\|f\|^4_{L^4(|x|\geq R)}\leq \frac{c}{R^2}\|\nabla f\|_{L^2(|x|\geq R)}\|f\|^3_{L^2(|x|\geq R)},
$$
 and hence, they need the radial restriction. Here, we use the decay of $|x|^{-b}$ away from the origin to obtain the desired estimate in the general case.}
	\begin{align}\label{eq_48}
		\int |x|^{-b}|u_{2}(t)|^{2\sigma+2}\,dx&=\int |x|^{-b}\left|\left(1-\phi\left(\frac{x}{R(t)}\right)\right)u(t
)\right|^{2\sigma+2}\,dx\nonumber\\&\leq c \int_{|x|\geq R(t)}|x|^{-b}|u(t)|^{2\sigma+2}\,dx \leq \frac{c}{R(t)^b}\|\nabla u(t)\|_{L^2}^{\sigma N}\|u_0\|_{L^2}^{2\sigma +2-\sigma N}\\
&\leq \frac{c}{a^b_1} \|\nabla u(t)\|_{L^2}^2
		\leq \frac{1}{4}\|\nabla u(t)\|_{L^2}^2,
	\end{align}
where we have chosen $a_1>0$ sufficiently large such that the last inequality holds.

Now, from the Gagliardo-Nirenberg inequality \eqref{GNine} and the Sobolev embedding $\dot{H}^{s_c} \subset L^{\sigma_c}$, we get
	\begin{align}\label{u1H1}
		\int |x|^{-b}|u_{1H}(t)|^{2\sigma+2}\,dx&\leq c \|\nabla u_{1H}(t)\|_{L^2}^2\|u_{1H}(t)\|_{L^{\sigma_c}}^{2\sigma}\leq c\|\nabla u_{1H}(t)\|_{L^2}^2\|u_{1H}(t)\|_{\dot{H}^{s_c}}^{2\sigma}\nonumber\\
		&=c\|\nabla u_{1H}(t)\|_{L^2}^2\left\||\xi|^{s_c}\left(1-\widehat{\chi}\left(\frac{\xi}{\rho(t)}\right)\right)\widehat{u}_1(t)\right\|_{L^2}^{2\sigma}
	\end{align}
%where we also have used that $\|\nabla u_{1H}(t)\|_{L^2}\leq c\|\nabla u(t)\|_{L^2}$, since $\chi \in C^{\infty}_0(\Real^N)$. 
We claim that
\begin{equation}\label{u1H2}
|\xi|^{s_c}\left(1-\widehat{\chi}\left(\frac{\xi}{\rho(t)}\right)\right)\leq c \frac{|\xi|}{\rho(t)^{1-s_c}}.
\end{equation}
Indeed, by the mean value theorem
	$$|1-\widehat{\chi}(\xi)|=|\widehat{\chi}(0)-\widehat{\chi}(\xi)|\leq c\min\{1,|\xi|\}.$$
Therefore, for $|\xi|\leq \rho(t)$, we have
	$$|\xi|^{s_c}\left|1-\widehat{\chi}\left(\frac{\xi}{\rho(t)}\right)\right|\leq c|\xi|^{s_c}\frac{|\xi|}{\rho(t)}\leq c\frac{|\xi|}{\rho(t)^{1-s_c}}$$
	and if $|\xi|\geq \rho(t)$, then
	$$|\xi|^{s_c}\left|1-\widehat{\chi}\left(\frac{\xi}{\rho(t)}\right)\right|\leq c|\xi|^{s_c}= |\xi|^{s_c}\frac{|\xi|}{|\xi|}\leq \frac{|\xi|}{\rho(t)^{1-s_c}}.$$
	Moreover, since $\sigma>\frac{2-b}{N}$ and $\|\nabla u(t)\|_{L^2}>1$ for $t$ close enough to $T^{\ast}$, from the definition of $R(t)$ \eqref{R(t)} we get
	\begin{align}
		\left\|\nabla u_1(t)\right\|_{L^2} &\leq \frac{c}{R(t)}\|u_0\|_{L^2}+c\|\nabla u(t)\|_{L^2}\\
		&\leq  c\,\left(\|\nabla u(t)\|^{\frac{2-\sigma N}{b}}_{L^2}+\|\nabla u(t)\|_{L^2}\right)\\ 
		&\leq c\|\nabla u(t)\|_{L^2}. \label{u1H3}
	\end{align}
For a constant $a_2>0$, let
\begin{equation}\label{rho(t)}
\rho(t)=a_2\|\nabla u(t)\|_{L^2}^{\frac{1}{1-s_c}}.
\end{equation}
Collecting the estimates \eqref{u1H4}, \eqref{u1H1}, \eqref{u1H2}, \eqref{u1H3} and choosing $a_2>0$ sufficiently large we have 
\begin{align}\label{eq_410}
\int |x|^{-b}|u_{1H}(t)|^{2\sigma+2}\,dx\leq c\|\nabla u_1(t)\|_{L^2}^2\left\|\frac{|\xi|}{\rho(t)^{1-s_c}}\widehat{u}_{1}(t)\right\|_{L^2}^{2\sigma}\leq c\frac{\|\nabla u(t)\|_{L^2}^{2\sigma+2}}{\rho(t)^{2\sigma(1-s_c)}}\leq \frac{1}{4}\|\nabla u(t)\|_{L^2}^2.
	\end{align}
	Therefore, in view of the inequality \eqref{equ}, \eqref{eq_48} and \eqref{eq_410}, we deduce
	\begin{equation}\label{inq}
		\left\|\nabla u(t)\right\|_{L^{2}}^{2}\leq c\int |x|^{-b}|u_{1L}(t)|^{2\sigma+2}\,dx.
	\end{equation}
Next, using again the Gagliardo-Nirenberg inequality \eqref{GNine} and the estimates \eqref{u1H4}, \eqref{u1H3} and \eqref{inq}, we obtain
	\begin{align*}
		\int |x|^{-b}|u_{1L}(t)|^{2\sigma+2}\,dx&\leq c\|\nabla u_{1L}(t)\|_{L^2}^{2}\|u_{1L}(t)\|_{L^{\sigma_c}}^{2\sigma}\\
		&\leq c\|u_{1L}(t)\|_{L^{\sigma_c}}^{2\sigma}\int |x|^{-b}|u_{1L}(t)|^{2\sigma+2}\,dx.
	\end{align*}
Moreover, Young's convolution inequality yields
	$$\|u_{1L}(t)\|_{L^{\sigma_c}}=\|\rho^N\chi(\rho\,\cdot)\ast u_1(t)\|_{L^{\sigma_c}}\leq c\|u_1(t)\|_{L^{\sigma_c}}.$$
From the last two inequalities it follows that $\|u_1(t)\|_{L^{\sigma_c}}$ is bounded from below by a universal constant $c>0$ independent of the initial data $u_0$ and time $t$. Finally, by definition of $u_1(t)$, we get
$$
c^{\sigma_c}\leq \|u_1(t)\|_{L^{\sigma_c}}^{\sigma_c}=\left\|\phi\left(\frac{x}{R(t)}\right)u(t)\right\|_{L^{\sigma_c}}^{\sigma_c}\leq \int_{|x|\leq2R(t)}|u(t)|^{\sigma_c}\,dx=\|u(t)\|^{\sigma_c}_{L^{\sigma_c}_{\{|x|\leq 2R(t)\}} },
$$
which, by the definition of $R(t)$ \eqref{R(t)}, implies \eqref{fint} and complete the proof of Theorem \ref{Tlpc}.

%\textbf{Case 1.} If $\|u_1(t)\|_{L^{\sigma_c}}$ is not bounded (so $\| u_1(t)\|_{L^{\gamma}} $ is also unbounded for any $\gamma > \sigma_c$), then there exist a sequence $t_n\to T^{\ast}$ such that $\|u_1(t_n)\|_{L^{\sigma_c}}\to +\infty.$
%		
%		Since
%		$$\|u_1(t)\|_{L^{\sigma_c}}^{\sigma_c}=\left\|\phi\left(\frac{x}{R(t)}\right)u(t)\right\|_{L^{\sigma_c}}^{\sigma_c}\leq \int_{|x|\leq2R(t)}|u(t)|^{\sigma_c}\,dx=\|u(t)\|^{\sigma_c}_{L^{\sigma_c}_{\{|x|\leq 2R(t)\}} },$$
%		we have \eqref{inft}.\\
%		
%\textbf{Case 2.} If, on the other hand, $\| u_1(t)\|_{L^{\gamma}} \leq c^*$, for some $\gamma > \sigma_c$, then we have \eqref{itemii}. Since $|x_0(t)|\,\rho(t)\leq c(c^*)^{\frac{\gamma}{N-1}+\frac{\sigma_c\gamma(\gamma-2)}{(N-1)(N(2\sigma+2)-\sigma_c(N-b))}}$, then$$
%		\frac{c}{(c^*)^{\frac{\gamma N}{N(2\sigma+2)-\sigma_c(N-b)}}}
%		\leq \left\|u_{1}(t)\right\|_{L^{\sigma_c}\left(|x-x_0(t)|\leq \rho(t)^{-1}\right)}
%		\leq \left\|u_{1}(t)\right\|_{L^{\sigma_c}\left(|x|\leq c(c^*)^{\frac{\gamma}{N-1}+\frac{\sigma_c\gamma(\gamma-2)}{(N-1)(N(2\sigma+2)-\sigma_c(N-b))}} \rho(t)^{-1}\right)},
%		$$
%		which gives \eqref{fint}.
\end{proof}

As a consequence of the above argument, if we further assume that $\|u_1(t)\|_{L^{\sigma_c}}$ is not bounded above, then there exist a sequence $t_n\to T^{\ast}$ such that
\begin{equation}\label{inft}
\int_{|x|\leq c_{u_0}\|\nabla u(t_n)\|^{-\frac{2-\sigma N}{b}}_{L^2}}|u(x,t_n)|^{\sigma_c}\,dx\to +\infty.
\end{equation}
On the other hand, if $\|u_1(t)\|_{L^{\sigma_c}}$ is bounded, then it may be possible to prove that the concentration window shrinks at a different rate following the strategy of \citet[Theorem 1.2]{HR07}. We have decided not to explore this for the moment, since our main purpose here is to show how to remove the radial assumption and still obtain a concentration result around the origin for the INLS equation.

\section{Blow-up solutions in $\dot H^{s_c}\cap \dot H^1$}\label{sec4}
\subsection{Non-radial Gagliardo-Nirenberg inequality}
 
We first recall the following scaling invariant Morrey-Campanato type semi-norm used in \citet{MR_Bsc} (see also \cite{CF20})
\begin{equation}\label{defrho}
\rho(u,R)=\sup_{R'\geq R}\frac{1}{(R')^{2s_c}}\int_{R'\leq |x|\leq 2R'}|u|^2\,dx.
\end{equation}
It is easy to see that $\rho(u,R)$ is non-increasing in $R>0$. Moreover, by Holder's inequality, there exists a universal constant $c>0$ such that for all $u\in L^{\sigma_c}$ and $R>0$
\begin{equation}\label{radial1}
	\frac{1}{R^{2s_c}}\int_{|x|\leq R}|u|^2\,dx\leq c\|u\|_{L^{\sigma_c}}^2
\end{equation}
and
\begin{equation}\label{radial2}
	\lim_{R\to+\infty}\frac{1}{R^{2s_c}}\int_{|x|\leq R}|u|^2\,dx=0
\end{equation}
(see Merle and Raph\"ael in \citep[Lemma 1]{MR_Bsc} and also \cite[Lemma 2.1]{CF20}). We should point out that no radial symmetry is needed to obtain these estimates. Next, we prove a crucial interpolation inequality for general functions in $\dot{H}^{s_c}\cap\dot{H}^{1}$.

\begin{lemma}\label{lemaradialGN}[Non-radial Gagliardo-Nirenberg inequality] Suppose that $N\geq 3$, $0<b<2$, $\frac{2-b}{N}<\sigma<\min\{\frac{2}{N},\frac{2-b}{N-2}\}$.
		Then, for all $\eta>0$, there exists a constant $C_\eta>0$ such that for all $u\in \dot{H}^{s_c}\cap\dot{H}^{1}$ the following inequality holds
		\begin{equation}\label{GNradial}
		\int_{|x|\geq R}|x|^{-b}|u|^{2\sigma+2}\,dx\leq \eta\|\nabla u\|_{L^{2}}^{2}+\frac{C_{\eta}}{R^{2(1-s_ c)}}[\rho(u,R)]^{\frac{2\sigma+2-\sigma N}{2-\sigma N}}.
		\end{equation}
\end{lemma}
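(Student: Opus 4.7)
The plan is to partition the region $\{|x|\geq R\}$ into dyadic annuli and, on each annulus, interpolate between the global Sobolev embedding $\dot H^{1}\hookrightarrow L^{2^{*}}$ (where $2^{*}=2N/(N-2)$) and the $L^{2}$ mass on that annulus, which is precisely what the seminorm $\rho(u,R)$ controls. The explicit weight $|x|^{-b}$ plays a double role: it provides the spatial decay that the radial analogue in \cite{MR_Bsc,CF20} extracted from Strauss-type estimates, and it contributes an extra negative power of $2^{j}R$ that will make the sum over the annuli converge.

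Concretely, writing $A_{j}=\{2^{j}R\leq |x|<2^{j+1}R\}$ for $j\geq 0$, I would use $|x|^{-b}\leq (2^{j}R)^{-b}$ on $A_{j}$ and split $|u|^{2\sigma+2}=|u|^{\sigma N}\cdot |u|^{2\sigma+2-\sigma N}$. H\"older's inequality with conjugate exponents $p=2/((N-2)\sigma)$ and $p'=2/(2\sigma+2-\sigma N)$, whose reciprocals sum to $1$ thanks to the elementary identity $(N-2)\sigma/2+(2\sigma+2-\sigma N)/2=1$, gives
\[
\int_{A_{j}}|u|^{2\sigma+2}\,dx\leq \|u\|_{L^{2^{*}}(A_{j})}^{\sigma N}\|u\|_{L^{2}(A_{j})}^{2\sigma+2-\sigma N}.
\]
Sobolev embedding bounds the first factor by $C\|\nabla u\|_{L^{2}}^{\sigma N}$, while the definition of $\rho$ yields $\|u\|_{L^{2}(A_{j})}^{2}\leq (2^{j}R)^{2s_{c}}\rho(u,R)$. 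The arithmetic identity $s_{c}(2\sigma+2-\sigma N)-b=-(1-s_{c})(2-\sigma N)$ (a rewriting of $2\sigma s_{c}=\sigma N-(2-b)$) shows that the resulting exponent on $(2^{j}R)$ is strictly negative under our hypotheses $\sigma<2/N$ and $s_{c}<1$, so summing the geometric series in $j$ produces
\[
\int_{|x|\geq R}|x|^{-b}|u|^{2\sigma+2}\,dx\leq C R^{-(1-s_{c})(2-\sigma N)}\|\nabla u\|_{L^{2}}^{\sigma N}\rho(u,R)^{(2\sigma+2-\sigma N)/2}.
\]

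The last step is Young's inequality applied to the product $\|\nabla u\|_{L^{2}}^{\sigma N}\cdot\bigl[R^{-(1-s_{c})(2-\sigma N)}\rho(u,R)^{(2\sigma+2-\sigma N)/2}\bigr]$ with conjugate exponents $2/(\sigma N)$ and $2/(2-\sigma N)$; this is precisely where the assumption $\sigma<2/N$ is used. Raising the second factor to the power $2/(2-\sigma N)$ converts the $R$-exponent into $-2(1-s_{c})$ and the $\rho$-exponent into $(2\sigma+2-\sigma N)/(2-\sigma N)$, which matches the target inequality \eqref{GNradial} with $C_{\eta}$ absorbing the dependence on the small parameter $\eta>0$.

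The main obstacle is less a conceptual one than careful exponent bookkeeping: one has to verify that the range $(2-b)/N<\sigma<2/N$ is exactly what is needed for both the geometric series over $j$ to converge and for Young's inequality to apply, and that the various powers conspire to reproduce the precise $R$- and $\rho$-exponents displayed in \eqref{GNradial}. The conceptual upshot is that, unlike in the pure NLS setting, the weight $|x|^{-b}$ supplies enough decay at infinity to replace the radial symmetry hypothesis used in \cite{MR_Bsc,CF20}.
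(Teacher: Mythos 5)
Your proof is correct and follows essentially the same route as the paper: the same dyadic decomposition of $\{|x|\geq R\}$, the same H\"older--Sobolev interpolation on each annulus exploiting the decay of $|x|^{-b}$ together with the exponent identity $b-s_c(2\sigma+2-\sigma N)=(1-s_c)(2-\sigma N)>0$, and finally Young's inequality with exponents $2/(\sigma N)$ and $2/(2-\sigma N)$. The only (harmless, and in fact slightly cleaner) difference is that you sum the convergent geometric series in $j$ first and apply Young's inequality once at the end, whereas the paper applies Young on each annulus and therefore needs an auxiliary splitting parameter $\theta\in(0,(1-s_c)(2-\sigma N))$ to make both resulting series over $j$ summable.
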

\begin{proof}
Since $\rho(u,R)$ is non-increasing in $R>0$, given $j\in \mathbb{N}$, we first note that $\rho(u,2^jR)\leq \rho(u,R)$.
%	\begin{align}\label{monrho}
%	\rho(u,2^jR)=\sup _{R'\geq 2^jR}\frac{1}{(R')^{2s_c}}\int_{R'\leq |x|\leq2R'}|u|^2\,dx\leq \sup_{R'\geq R}\frac{1}{(R')^{2s_c}}\int_{R'\leq |x|\leq 2R'}|u|^2\,dx=\rho(u,R).
%	\end{align}
Now, for each $j\in \mathbb N$, let
$$
\mathcal C_j=\{x\in \mathbb R;\,2^jR\leq |x|\leq 2^{j+1}R\}.
$$
By interpolation and Sobolev embedding, we first have\footnote{As in the proof of Theorems \ref{thmblowdisper} and \ref{Tlpc}, this is the step where we use the decaying factor in the nonlinearity to replace the radial assumption.}
\begin{align}
	\int_{C_j}|x|^{-b}|u|^{2\sigma+2}\,dx&\leq c\frac{1}{(2^jR)^b}\|u\|_{L^{\frac{2N}{N-2}}(\mathcal C_j)}^{\sigma N}\|u\|_{L^2(\mathcal C_j)}^{2\sigma+2-\sigma N}\\
	%&\leq c\frac{1}{(2^jR)^b}\|u\|_{L^{\frac{2N}{N-2}}(\mathcal C_j)}^{\sigma N}\|u\|_{L^2(\mathcal C_j)}^{2\sigma+2-\sigma N}\\
	&\leq c\|\nabla u\|_{L^2}^{\sigma N}\frac{1}{(2^jR)^{(1-s_c)(2-\sigma N)}}[\rho(u,2^jR)]^{\sigma+1-\frac{\sigma N}{2}},\label{holder}
\end{align}
where in the last inequality we have also used the definition \eqref{defrho} and the fact that
$$
b-s_c(2\sigma +2-\sigma N)=(1-s_c)(2-\sigma N)>0.
$$
Let ${\theta}\in (0,(1-s_c)(2-\sigma N))$. Given $\tilde \eta>0$, by Young's inequality, there exists a constant $\tilde C_{ \tilde \eta}>0$ such that
\begin{align}
	\int_{C_j}|x|^{-b}|u|^{2\sigma+2}\,dx\leq  \frac{\tilde \eta}{(2^j)^{\frac{2\theta}{\sigma N}}} \|\nabla u\|_{L^2}^2+\frac{\tilde C_{\tilde\eta}}{(2^jR)^{2(1-s_c)}(2^j)^{-\frac{2\theta}{2-\sigma N}}}[\rho(u,2^j R)]^{\frac{2\sigma+2-\sigma N}{2-\sigma N}}.\label{GNann}
\end{align}
%Now, if $b\geq \frac{4}{N}$, we obtain the condiction $\sigma <\frac{2-b}{N-2}\leq \frac{b}{2}$, and thus, $\sigma N<\sigma(N-2)+b$. So, using that $R>1$ and $\|\nabla u\|_{L^2}>1$, we have that 
%\begin{align}
%\|\nabla u\|_{L^2}^{\sigma N}<\|\nabla u\|_{L^2}^{\sigma (N-2)+b}\,\,\,\,\mbox{ and }\,\,\,\,R^{(1-s_c)(2-\sigma(N-2)-b)}<R^{(1-s_c)t\max(2-\sigma N)}.
%\end{align}
%Thus, for $0<\theta_2<(1-s_c)(2-\sigma(N-2)+b)$ we can rewrite \eqref{holder} as
%\begin{align}\label{holder2}
%	\int_{2^jR\leq |x|\leq 2^{j+1}R}|x|^{-b}|u|^{2\sigma+2}\,dx\leq c\frac{1}{(2^j)^\theta}\|\nabla u\|_{L^2}^{\sigma (N-2)+b}\frac{1}{(2^jR)^{(1-s_c)(2-\sigma (N-2)-b)}(2^j)^{-\theta}}[\rho(u,2^jR)]^{\sigma+1-\frac{\sigma N}{2}}.
%\end{align}
%Next, by Young inequality, we obtain for each $\tilde\eta>0$
%\begin{align}
%	\int_{2^jR\leq |x|\leq 2^{j+1}R}|x|^{-b}|u|^{2\sigma+2}\,dx\leq \frac{\tilde \eta}{(2^j)^{\frac{2\theta}{\sigma(N-2)+b}}}\|\nabla u\|_{L^2}^2+\frac{\tilde C_{\tilde\eta}}{(2^jR)^{2(1-s_c)}(2^j)^{-\frac{2\theta}{2-\sigma(N-2)-b}}}[\rho(u,2^j R)]^{\frac{2\sigma+2-\sigma N}{2-\sigma (N-2)-b}}.\label{GNann2}
%\end{align}

	Therefore, from \eqref{GNann}, we deduce that
	\begin{align}
	\int_{|x|\geq R}|x|^{-b}|u(x)|^{2\sigma+2}\,dx&=\sum_{j=0}^{\infty}\int_{C_j}|x|^{-b}|u|^{2\sigma+2}\,dx\\
	&\leq \tilde \eta\left(\sum_{j=0}^{\infty}\frac{1}{(2^{\frac{2\theta}{\sigma N}})^j}\right)\|\nabla u\|_{L^2}^{2}\\
	&\quad +\frac{C_{\tilde\eta}}{R^{2(1-s_c)}}\left(\sum_{j=0}^{\infty}\frac{1}{\left(2^{^{\frac{2[(1-s_c)(2-\sigma N)-\theta]}{2-\sigma N}}}\right)^j}\right)[\rho(u, R)]^{\frac{2\sigma+2-\sigma N}{2-\sigma  N}},\nonumber
	\end{align}
which completes the proof of the lemma since the above two series are summable.
\end{proof} 

\subsection{Existence of blow-up solutions and lower bound for the blow-up rate}
 
We now prove some uniform estimates that are the main ingredients in the proof of Theorems \ref{scteo1}-\ref{scteo2} and Corollary \ref{cor13}. The technique is very similar to the one used in the proof of \cite[Propositions 4.1-4.2]{CF20} and was inspired by the work of \citet{MR_Bsc}, these papers treat the radial setting. Here, we show that Lemmas \ref{lemintradial}-\ref{lemaradialGN} allow us to consider general solutions of the INLS equation \eqref{PVI}.

%In this section, we prove two propositions that are the key pieces in the proof of Theorems \ref{scteo1} and \ref{scteo2}. First, we deduce a control on the semi-norm $\rho$ and the $\dot{H}^1$ norm for radially symmetric solutions to \eqref{PVI}, assuming an initial control on the $L^{\sigma_c}$ norm and the energy.
%From Gagliardo-Nirenberg inequality \eqref{GNine}, we deduce that
%\begin{align}
%E[u]\geq \frac{1}{2}\int|\nabla u|^2\,dx\left[1-\left(\frac{\|u\|_{L^{\sigma_c}}}{\|V\|_{L^{\sigma_c}}}\right)^{2\sigma}\right],
%\end{align}
%where $V$ is a solution to elliptic equation \eqref{elptcpc1} with minimal $L^{\sigma_c}$-norm.
%Assuming a $L^{\sigma_c}$ control and a priori bound on the non positive part of the energy, in the Proposition \ref{prop1}, we obtain a uniform control of the scaling invariant $\rho$ norm and a dispersive control of the radially symmetric solution to \eqref{PVI}.
\begin{prop}\label{prop1}
Let $N\geq3$, $0<b<\min\{\frac{N}{2},2\}$, $\frac{2-b}{N}<\sigma<\min\left\{\frac{2-b}{N-2},\frac{2}{N}\right\}$ and $v_0 \in  \dot H^{s_c}\cap \dot H^1$ such that the corresponding solution $v\in C\left([0,\tau_\ast]: \dot{H}^{s_c}\cap\dot{H}^1\right)$ to \eqref{PVI}  satisfies
	\begin{equation}\label{hpprop1i}
	\tau_\ast^{1-s_c}\max\{E[v_0],0\}<1
	\end{equation}
	and
	\begin{align}\label{hpprop1ii}
	M_0:=\frac{4\|v_0\|_{L^{\sigma_c}}}{\|V\|_{L^{\sigma_c}}}\geq 2,
	\end{align}
where $V$ is a solution to elliptic equation \eqref{elptcpc1} with minimal $L^{\sigma_c}$-norm. Then, there exist universal constants $C_1,\alpha_1,\alpha_2>0$ depending only on $N,\sigma$ and $b$  such that, for all $\tau_0\in [0,\tau_\ast]$, the following uniform estimates hold
	\begin{align}\label{prop1ii}
	\rho(v(\tau_0),M_0^{\alpha_1}\sqrt{\tau_0})\leq C_1M_0^2
	\end{align}
and 
	\begin{align}\label{prop1i}
	\int_0^{\tau_0}(\tau_{0}-\tau)\|\nabla v(\tau)\|_{L^2}^2\,d\tau \leq M_0^{\alpha_2}\tau_{0}^{1+s_c}.
	\end{align}
\end{prop}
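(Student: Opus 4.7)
The plan is a joint continuity/bootstrap argument in time for both estimates, combining the non-radial virial estimate of Lemma \ref{lemintradial} with the non-radial Gagliardo-Nirenberg inequality of Lemma \ref{lemaradialGN}, in the spirit of \cite[Prop.~4.1--4.2]{CF20}. The first step is to turn Lemma \ref{lemintradial} into a clean pointwise differential inequality: in the right-hand side of \eqref{estintradial} I apply Lemma \ref{lemaradialGN} with $\eta>0$ so small that $\eta\|\nabla v\|^2_{L^2}$ is absorbed into the $8\sigma s_c\|\nabla v\|^2_{L^2}$ on the left, and I dominate $R^{-2}\int_{2R\leq|x|\leq 4R}|v|^2$ by $cR^{-2(1-s_c)}\rho(v(t),R)$ directly from the definition \eqref{defrho}. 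With $p=\tfrac{2\sigma+2-\sigma N}{2-\sigma N}>1$ this yields
$$
\sigma s_c\|\nabla v(t)\|^2_{L^2}+z_R''(t)\leq C\max\{E[v_0],0\}+\frac{C}{R^{2(1-s_c)}}\bigl(\rho(v(t),R)+\rho(v(t),R)^p\bigr).
$$

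Next I would pick the natural scale $R^2=M_0^{\alpha_1}\tau_0$, multiply the previous inequality by $(\tau_0-\tau)$ and integrate on $[0,\tau_0]$; one integration by parts turns the $z_R''$ contribution into $z_R(0)+\tau_0 z_R'(0)-z_R(\tau_0)$, and the last term is non-negative and is dropped. The hypothesis $\tau_*^{1-s_c}\max\{E[v_0],0\}<1$ converts $E[v_0]\tau_0^2$ into a multiple of $\tau_0^{1+s_c}$; the prefactor $\tau_0^2/R^{2(1-s_c)}$ becomes $M_0^{-\alpha_1(1-s_c)}\tau_0^{1+s_c}$; and the boundary term $z_R(0)\leq cR^{2+2s_c}\|v_0\|^2_{L^{\sigma_c}}$ is handled via \eqref{radial1} and $M_0\geq 2$. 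This sets up the bootstrap on $\rho$: for a large constant $C_1$ to be chosen, let
$$
T=\sup\bigl\{\tau_0\in[0,\tau_*]:\rho\bigl(v(s),M_0^{\alpha_1/2}\sqrt{s}\bigr)\leq 2C_1M_0^2\ \text{for all}\ s\in[0,\tau_0]\bigr\}.
$$
Estimate \eqref{radial1} applied at the initial time gives $\rho(v_0,0)\leq c\|v_0\|^2_{L^{\sigma_c}}\leq cM_0^2$, so $T>0$ by continuity. On $[0,T)$, plugging the bootstrap assumption into the integrated inequality and choosing $\alpha_1$ large enough for $M_0^{-\alpha_1(1-s_c)}(2C_1M_0^2)^p$ to be dominated by $M_0^{\alpha_2}$ produces \eqref{prop1i}. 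To close \eqref{prop1ii}, I would test the continuity equation $\partial_t|v|^2=-2\nabla\cdot\mathrm{Im}(\overline v\nabla v)$ against an annular cutoff supported in $R'\leq|x|\leq 2R'$, integrate in time, and apply Cauchy-Schwarz together with the space-time bound just obtained; for $R'\geq M_0^{\alpha_1/2}\sqrt{\tau_0}$ this should provide a strict improvement of the $\rho$ control, forcing $T=\tau_*$.

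The main technical obstacle is the bookkeeping of the exponents of $M_0$: $\alpha_1$ cannot be too small, otherwise $M_0^{-\alpha_1(1-s_c)}\rho^p$ is not absorbed at the level of $M_0^{\alpha_2}$, nor too large, otherwise the initial-data contribution $z_R(0)\sim M_0^{\alpha_1(1+s_c)}\tau_0^{1+s_c}$ dominates. In addition, the boundary term $\tau_0 z_R'(0)$ appearing in the twice-integrated inequality lacks an obvious bound involving only $M_0$ and the hypotheses of the proposition, and must be controlled via localized Sobolev estimates on $v_0\in\dot H^{s_c}\cap\dot H^1$ fitted into the same scheme.
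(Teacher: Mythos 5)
Your skeleton (virial identity plus the non-radial Gagliardo--Nirenberg inequality, twice integrated in time at the scale $R\sim M_0^{\alpha_1}\sqrt{\tau_0}$, closed by a continuity argument) is the same as the paper's, but the two points you flag at the end as technical bookkeeping are where the argument actually lives, and your proposed fixes do not work. First, the boundary term $\tau_0 z_R'(0)$: any purely static estimate of $z_R'(0)=2\,\mathrm{Im}\int\nabla\phi_R\cdot\nabla v_0\,\overline{v_0}\,dx$ by Cauchy--Schwarz together with \eqref{radial1} gives at best $|z_R'(0)|\leq cR^{1+s_c}\|\nabla v_0\|_{L^2}\|v_0\|_{L^{\sigma_c}}$, and $\|\nabla v_0\|_{L^2}$ is controlled neither by $M_0$ nor by the hypotheses \eqref{hpprop1i}--\eqref{hpprop1ii}; no ``localized Sobolev estimate on $v_0$'' can remove it. The paper's resolution (\cite[Lemma 4.4]{CF20}, i.e.\ estimate \eqref{ImE}) is dynamic, not static: one uses the space-time dispersive bound to select a good intermediate time at which $z_R'$ is small and transports that information back to $t=0$ through the virial identity. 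This is precisely why the paper runs a \emph{joint} bootstrap on the pair \eqref{dispersioneps}--\eqref{Minftyep}: the dispersive bound is needed to control $z_R'(0)$, which in turn enters the closure of the $\rho$ bound, so a bootstrap on $\rho$ alone, as you propose, cannot close.

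Second, your closure of \eqref{prop1ii} via the continuity equation fails quantitatively. The flux argument requires $\int_0^{\tau_0}\|\nabla v(\tau)\|_{L^2}\,d\tau$, but the only available space-time information is the weighted bound $\int_0^{\tau_0}(\tau_0-\tau)\|\nabla v(\tau)\|_{L^2}^2\,d\tau\lesssim\tau_0^{1+s_c}$, whose weight degenerates at $\tau=\tau_0$: a dyadic decomposition of $[0,\tau_0)$ near the endpoint shows each block contributes a fixed amount of order $G^{1/2}\tau_0^{(1+s_c)/2}$ to $\int\|\nabla v\|_{L^2}$, so the sum diverges. The paper avoids this entirely by reading the local mass at time $\tau_0$ off the same twice-integrated virial identity: since $\phi_R\geq R^2$ on $\{R\leq|x|\leq 2R\}$, one has $R^2\int_{R\leq|x|\leq 2R}|v(\tau_0)|^2\,dx\leq z_R(\tau_0)\leq z_R(0)+\tau_0 z_R'(0)+\int_0^{\tau_0}(\tau_0-\tau)z_R''(\tau)\,d\tau$, which after division by $R^{2+2s_c}$ is inequality \eqref{lemii} and uses only the weighted bound. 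You should replace the flux step by this and restructure the continuity argument around the two coupled quantities \eqref{dispersioneps} and \eqref{Minftyep}.
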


\begin{proof} For all $A>0$ and $\tau_0\in [0,\tau_{\ast}]$, let $R=A\sqrt{\tau_0}$ and $M_{\infty}$ defined by
	\begin{align}\label{Minfty}
	M^2_{\infty}(A,\tau_0)=\max_{\tau\in[0,\tau_0]}\rho(v(\tau),A\sqrt{\tau}).
	\end{align}
Then using the non-radial Gagliardo-Nirenberg inequality \eqref{GNradial} and following the proof of Lemma 4.3 in \cite{CF20} we obtain the existence of a universal constant $c>0$ such that
	\begin{align}
	8\sigma s_c\int_{0}^{\tau_0}(\tau_0-\tau)\|\nabla v(\tau)\|_{L^2}^2\,d\tau&\leq  c\tau_0^{1+s_c}\left[A^{2(1+s_c)}\|v_0\|_{L^{\sigma_c}}^2+\frac{[M^2_{\infty}(A,\tau_0)]^{\frac{2\sigma+2-\sigma N}{2-\sigma N}}+M^2_{\infty}(A,\tau_0)}{A^{2(1-s_c)}}\right]\nonumber\\
	&\quad\quad+2\tau_0\left[z'_R(0)+8\tau_0(\sigma s_c+1) E[v_0]\right] \label{lemi}
	\end{align}
	and
	\begin{align}
	\frac{1}{R^{2s_c}}\int_{R\leq |x|\leq 2R}|v(\tau_0)|^2\,dx&\leq c\|v_0\|_{L^{\sigma_c}}^2+\frac{c}{A^4}\left[[M^2_{\infty}(A,\tau_0)]^{\frac{2\sigma+2-\sigma N}{2-\sigma N}}+M^2_{\infty}(A,\tau_0)\right]\\
	&\quad\quad+\frac{2}{\tau_0^{s_c}A^{2(1+s_c)}}\left[z'_R(0)+8\tau_0(\sigma s_c+1)  E[v_0]\right]. \,\,\,\,\,\, \label{lemii}
	\end{align}

Now, let $\varepsilon>0$ a fixed small enough real number to be chosen later and define%depending only of $\sigma_c$ and $v_0$
\begin{align}\label{Gep}
G_{\varepsilon}=M_0^{\frac{1}{\varepsilon}}\,\,\, \mbox{and}\,\,\,A_{\varepsilon}=\left(\frac{\varepsilon G_{\varepsilon}}{M_0^2}\right)^{\frac{1}{2(1+s_c)}},
\end{align}
where $M_0$ is given in \eqref{hpprop1ii}. Consider the following estimates
\begin{align}\label{dispersioneps}
\int_{0}^{\tau_0}(\tau_0-\tau)\|\nabla v(\tau)\|_{L^2}^{2}\,d\tau\leq G_{\varepsilon}\tau_0^{1+s_c}
\end{align}
and 
\begin{align}\label{Minftyep}
M_\infty^2(A_\varepsilon,\tau_0)\leq \frac{2M_0^2}{\varepsilon}.
\end{align}
We define 
\begin{align}\label{tau11}
S_{\varepsilon}= \left\{\tau\in [0,\tau_*];\,\,\eqref{dispersioneps}\mbox{ and }\eqref{Minftyep} \mbox{ hold for all }\tau_0\in [0,\tau]\right\}
\end{align}
and 
\begin{align}\label{tau1}
\tau_1=\max_{\tau\in [0,\tau_*]}S_{\varepsilon}.
\end{align}
Note that $S_{\varepsilon}\neq \emptyset$, for $\varepsilon>0$ sufficiently small, since $v\in C\left([0,\tau_\ast]: \dot{H}^{s_c}\cap\dot{H}^1\right)$ and also by the definition of $M_0$ in \eqref{hpprop1ii} and the Sobolev embedding $\dot{H}^{s_c} \subset L^{\sigma_c}$. 

The goal is to show that $\tau_1=\tau_\ast$ and therefore inequalities \eqref{dispersioneps} and \eqref{Minftyep} hold at the maximal time $\tau_{\ast}$, which clearly imply \eqref{prop1ii} and \eqref{prop1i}, in view of definition \eqref{Gep}. 	Indeed, from \eqref{Gep} and \eqref{Minftyep} it is easy to see that
	\begin{align}\label{110}
	\frac{[M^2_{\infty}(A_{\varepsilon},\tau_0)]^{\frac{2\sigma+2-\sigma N}{2-\sigma N}}+M^2_{\infty}(A_{\varepsilon},\tau_0)}{A_{\varepsilon}^{2(1-s_c)}}
&\leq \frac{\left(\frac{2M_0^2}{\varepsilon}\right)^{\frac{2\sigma+2-\sigma N}{2-\sigma N}}+\frac{2M_0^2}{\varepsilon}}{\left(\frac{\varepsilon G_{\varepsilon}}{M_0^2}\right)^{\frac{1-s_c}{1+s_c}}}\leq c\frac{1}{M_0^{\frac{1}{\varepsilon}\cdot \frac{1-s_c}{1+s_c}}} \left(\frac{M_0^2}{\varepsilon}\right)^{\frac{2\sigma+2-\sigma N}{2-\sigma N}+\frac{1-s_c}{1+s_c}}\leq \frac{1}{10}, %\\
%	&\leq \frac{\left(\frac{M_0^2}{\varepsilon}\right)^{\frac{2\sigma+2-\sigma N}{2-\sigma N}}+\frac{M_0^2}{\varepsilon}}{\left(\frac{\varepsilon}{M_0^2}\right)^{\frac{1-s_c}{1+s_c}}G_{\varepsilon}^{\frac{1-s_c}{1+s_c}}}
%	\leq \left(\frac{M_0}{\varepsilon}\right)^C\frac{1}{G_{\varepsilon}^{\frac{1-s_c}{1+s_c}}}\leq \frac{1}{2^{\frac{1}{C_{\varepsilon}}}\varepsilon^C}\leq \frac{1}{10},
	\end{align}
for $\varepsilon>0$ small enough, where we have used the assumption \eqref{hpprop1ii}. Combining the last inequality with \eqref{lemi} and using again definition \eqref{Gep} and assumption \eqref{hpprop1ii}, we have, for $R=A_{\varepsilon}\sqrt{\tau_0}$, that
	\begin{align}
	\int_{0}^{\tau_0}(\tau_0-\tau)\|\nabla v(\tau)\|_{L^2}^2\,d\tau&\leq  c\tau_0^{1+s_c}\left[M_0^2A_{\varepsilon}^{2(1+s_c)}+\frac{[M^2_{\infty}(A_{\varepsilon},\tau_0)]^{\frac{2\sigma+2-\sigma N}{2-\sigma N}}+M^2_{\infty}(A_{\varepsilon},\tau_0)}{A_{\varepsilon}^{2(1-s_c)}}\right]\nonumber\\
	&\quad+2c\tau_0\left[z'_R(0)+8\tau_0(\sigma s_c+1) E[v_0]\right]\\
	& \leq c\tau_0^{1+s_c}\left[\varepsilon G_{\varepsilon}+\frac{1}{10}\right]+2c\tau_0\left[z'_R(0)+8\tau_0(\sigma s_c+1) E[v_0]\right]\nonumber\\
	& \leq G_{\varepsilon}\tau_0^{1+s_c}\left\{c\varepsilon+\frac{c}{10G_{\varepsilon}}+\frac{2c}{G_{\varepsilon}\tau_0^{s_c}}\left[z'_R(0)+8\tau_0(\sigma s_c+1)E[v_0]\right]\right\}\nonumber\\
	&\leq G_{\varepsilon}\tau_0^{1+s_c}\left\{\frac{1}{10}+\frac{2c}{G_{\varepsilon}\tau_0^{s_c}}\left[z'_R(0)+8\tau_0(\sigma s_c+1)E[v_0]\right]\right\},\nonumber
	\end{align}
for $\varepsilon>0$ small enough. 

Next, from Lemma 4.4 in \cite{CF20}\footnote{In the proof of \cite[Lemma 4.4]{CF20} does not require a radial assumption.}, there exists $\varepsilon_0>0 $ small enough and $c>0$ a universal constant such that, for all $\tau_0\in[0,\tau_1]$, $0<\varepsilon\leq \varepsilon_0$, $A\geq A_\varepsilon$ and $R=A\sqrt{\tau_0}$, the following inequality holds
	\begin{align}\label{ImE}
	z'_R(0)+8\tau_0(\sigma s_c+1)E[v_0]\leq c \frac{M_0^2A^{2(1+s_c)}}{\varepsilon^{\frac{1}{1+s_c}}}\tau_0^{s_c}.
	\end{align}
Therefore, from estimate \eqref{ImE} with $A=A_\varepsilon$, for all $\tau_0\in [0,\tau_1]$ and since $R=A_{\varepsilon}\sqrt{\tau_0}$, we get
	\begin{align}\label{dispersioneps2}
	\int_{0}^{\tau_0}(\tau_0-\tau)\|\nabla v(\tau)\|^2_{L^2}\,d\tau&\leq G_{\varepsilon}\tau_0^{1+s_c}\left[\frac{1}{10}+c\frac{M_0^2A_{\varepsilon}^{2(1+s_c)}}{G_{\varepsilon}\varepsilon^{\frac{1}{1+s_c}}}\right]\\
	&=G_{\varepsilon}\tau_0^{1+s_c}\left[\frac{1}{10}+c\varepsilon^{\frac{s_c}{1+s_c}}\right]\leq \frac{G_{\varepsilon}}{2}\tau_0^{1+s_c}.
	\end{align}
Moreover, let $A\geq A_{\varepsilon}$ and $R=A\sqrt{\tau_0}$. First, since $\rho$ is non-increasing in $R$, then for all $\tau_0\in [0,\tau_1]$ we obtain
	\begin{align}
	M^2_{\infty}(A,\tau_0)\leq M^2_{\infty}(A_{\varepsilon},\tau_0)\leq M^2_{\infty}(A_{\varepsilon},\tau_1)\leq \frac{2M_0^2}{\varepsilon},
	\end{align}
where in the last inequality we have used \eqref{Minftyep}.\\
Combining \eqref{lemii} with \eqref{hpprop1ii}, the last inequality and \eqref{ImE} we deduce
	\begin{align}\label{Minftyep2}
	\frac{1}{R^{2s_c}}\int_{R\leq |x|\leq 2R}|v(\tau_0)|^2\,dx&\leq cM^2_0+\frac{c}{A^4}\left[[M^2_{\infty}(A,\tau_0)]^{\frac{2\sigma+2-\sigma N}{2-\sigma N}}+M^2_{\infty}(A,\tau_0)\right]\\
	&\quad +\frac{2}{\tau_0^{s_c}A^{2(1+s_c)}}\left[z'_R(0)+8\tau_0(\sigma s_c+1) E[v_0]\right]\nonumber\\
	&\leq cM_0^2+c\frac{\left(\frac{2M_0^2}{\varepsilon}\right)^{\frac{2\sigma+2-\sigma N}{2-\sigma N}}+\frac{2M_0^2}{\varepsilon}}{\left(\frac{\varepsilon G_{\varepsilon}}{M_0^2}\right)^{\frac{2}{1+s_c}}}+\frac{cM_0^2}{\varepsilon^{\frac{1}{1+s_c}}}\\
	&=\frac{M_0^2}{\varepsilon}\left[c\varepsilon+\frac{c\varepsilon}{M_0^{2+\frac{1}{\varepsilon}\cdot \frac{2}{1+s_c}}} \left(\frac{M_0^2}{\varepsilon}\right)^{\frac{2\sigma+2-\sigma N}{2-\sigma N}+\frac{2}{1+s_c}}+c\varepsilon^{\frac{s_c}{1+s_c}}\right]< \frac{M_0^2}{\varepsilon},
	\end{align}
for $\varepsilon>0$ small enough. 
	
Finally, in view of the regularity of $v$ and the definition of $S_{\varepsilon}$ in \eqref{tau11}, the inequalities \eqref{dispersioneps2} and  \eqref{Minftyep2} imply that $\tau_1=\tau_\ast$ and the proof of Proposition \ref{prop1} is completed.
\end{proof}

Next, we prove a lower bound on the $L^2$ norm of the initial data around the origin, assuming an additional restriction on the energy.
\begin{prop}\label{prop2}
Let $0<b<2$, $\frac{2-b}{N}<\sigma<\min\left\{\frac{2-b}{N-2},\frac{2}{N}\right\}$ and $v\in
 C\left([0,\tau_*]: \dot H^{s_c}\cap \dot H^1\right)$ a solution to \eqref{PVI} with initial data $v_0 \in  \dot H^{s_c}\cap \dot H^1$ such that \eqref{hpprop1i} and \eqref{hpprop1ii} of Proposition \ref{prop1} hold. Let 
	\begin{align}\label{hpprop2i}
	\tau_0\in\left[0,\frac{\tau_*}{2}\right].
	\end{align}
Define $\lambda_v(\tau)=\|\nabla v(\tau)\|^{-\frac{1}{1-s_c}}_{L^2}$ and assume that
	\begin{align}\label{Ev}
	E[v_0]\leq \frac{\|\nabla v(\tau_0)\|^2_{L^2}}{4}=\frac{1}{4\lambda_v^{2(1-s_c)}(\tau_0)}.
	\end{align}
Then, there exist universal  constants $C_2, \alpha_3>0$ depending only on $N,\sigma$ and $b$ such that if
	\begin{align}\label{F}
	F_*=\frac{\sqrt{\tau_0}}{\lambda_v(\tau_0)}\,\,\,\mbox{ and }\,\,\,D_*=M_0^{\alpha_3}\max[1,F_*^{\frac{1+s_c}{1-s_c}}],
	\end{align}
	then
	\begin{align}\label{tprop2}
	\frac{1}{\lambda_v^{2s_c}(\tau_0)}\int_{|x|\leq D_*\lambda_v(\tau_0)}|v_0|^2\,dx\geq C_2.
	\end{align}
\end{prop}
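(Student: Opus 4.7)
The plan is to first show $L^2$-mass concentration of $v(\tau_0)$ in a ball of radius comparable to $\lambda_v(\tau_0)$, and then propagate this concentration backwards in time to $v_0$ via a smooth cutoff-mass quantity. Using the energy conservation \eqref{Energy}, the hypothesis \eqref{Ev} and the Gagliardo-Nirenberg inequality \eqref{GNine}, one immediately gets the universal lower bound $\|v(\tau_0)\|_{L^{\sigma_c}}\geq c\|V\|_{L^{\sigma_c}}$ and
\[\int|x|^{-b}|v(\tau_0)|^{2\sigma+2}\,dx\geq c\|\nabla v(\tau_0)\|_{L^2}^2=\frac{c}{\lambda_v^{2(1-s_c)}(\tau_0)}.\]
I apply the non-radial estimate of Lemma \ref{lemaradialGN} with small $\eta$ at a radius $R_0\geq M_0^{\alpha_1}\sqrt{\tau_0}$, so that \eqref{prop1ii} of Proposition \ref{prop1} gives $\rho(v(\tau_0),R_0)\leq C_1M_0^2$. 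Once $R_0$ is additionally required to satisfy $R_0\geq cM_0^\mu \lambda_v(\tau_0)$ for a suitable $\mu$, the error term in \eqref{GNradial} becomes negligible, and this yields the localized lower bound
\[\int_{|x|\leq R_0}|x|^{-b}|v(\tau_0)|^{2\sigma+2}\,dx\geq c\|\nabla v(\tau_0)\|_{L^2}^2.\]

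Next I upgrade this localized potential-energy bound to an $L^2$-mass bound. Multiplying $v(\tau_0)$ by a smooth cutoff $\psi_{R_0}$ equal to $1$ on $|x|\leq R_0$ and supported in $|x|\leq 2R_0$, and applying the global Gagliardo-Nirenberg inequality \eqref{GNine} to $\psi_{R_0}v(\tau_0)$, the cutoff-gradient error $\|(\nabla\psi_{R_0})v(\tau_0)\|_{L^2}^2\leq c M_0^2/R_0^{2(1-s_c)}$ is, again by the Morrey-Campanato bound, negligible against $\|\nabla v(\tau_0)\|_{L^2}^2$ for the same choice of $R_0$. This gives a universal lower bound $\|v(\tau_0)\|_{L^{\sigma_c}(|x|\leq 2R_0)}\geq c_1>0$. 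Combining the H\"older interpolation $\|f\|_{L^{\sigma_c}(B)}\leq \|f\|_{L^2(B)}^{1-s_c}\|f\|_{L^{2N/(N-2)}}^{s_c}$ (the interpolation exponent is exactly $1-s_c$ by a direct computation using $s_c=N/2-(2-b)/(2\sigma)$) with the Sobolev embedding $\|v(\tau_0)\|_{L^{2N/(N-2)}}\leq c\lambda_v^{-(1-s_c)}(\tau_0)$, one concludes
\[\int_{|x|\leq 2R_0}|v(\tau_0)|^2\,dx\geq c\lambda_v^{2s_c}(\tau_0).\]

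Finally I transport this concentration to time $0$. Set $\tilde z_R(t)=\int\psi(x/R)|v(t)|^2\,dx$ with $\psi$ as above, and take $R\geq 2R_0$ so that $\tilde z_R(\tau_0)\geq c\lambda_v^{2s_c}(\tau_0)$. By Cauchy-Schwarz and the Morrey-Campanato bound, $|\tilde z_R'(\tau)|\leq cM_0 R^{s_c-1}\|\nabla v(\tau)\|_{L^2}$. Since $\tau_0\leq\tau_\ast/2$, one can apply \eqref{prop1i} at time $2\tau_0$ to obtain $\int_0^{\tau_0}\|\nabla v(\tau)\|_{L^2}^2\,d\tau\leq cM_0^{\alpha_2}\tau_0^{s_c}$; a further Cauchy-Schwarz in time then gives $\int_0^{\tau_0}|\tilde z_R'(\tau)|\,d\tau\leq cM_0^\beta R^{s_c-1}\tau_0^{(1+s_c)/2}$. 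Requiring the right-hand side to be at most $\tilde z_R(\tau_0)/2$ and substituting $\sqrt{\tau_0}=F_\ast\lambda_v(\tau_0)$ produces exactly the threshold $R\geq cM_0^{\alpha_3}F_\ast^{(1+s_c)/(1-s_c)}\lambda_v(\tau_0)$, which combined with $R\geq M_0^{\alpha_1}F_\ast\lambda_v(\tau_0)$ and $R\geq cM_0^\mu\lambda_v(\tau_0)$ recovers $R=D_\ast\lambda_v(\tau_0)$. Then $\int_{|x|\leq 2R}|v_0|^2\,dx\geq \tilde z_R(0)\geq \tilde z_R(\tau_0)-\int_0^{\tau_0}|\tilde z_R'(\tau)|\,d\tau\geq \tfrac{c}{2}\lambda_v^{2s_c}(\tau_0)$, which, after absorbing the factor $2$ on the ball radius into $\alpha_3$, is exactly \eqref{tprop2}. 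The most delicate point is the simultaneous tuning of the scale $R_0$ so that all three error-term conditions are met; the exponent $(1+s_c)/(1-s_c)$ on $F_\ast$ then arises naturally from balancing the Cauchy-Schwarz-in-time estimate on the gradient with the intercritical scaling $\lambda_v(\tau_0)=\|\nabla v(\tau_0)\|_{L^2}^{-1/(1-s_c)}$.
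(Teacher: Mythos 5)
Your proposal is correct and follows essentially the same route as the paper's proof: you localize the potential energy $\int|x|^{-b}|v(\tau_0)|^{2\sigma+2}\,dx$ using the energy hypothesis \eqref{Ev}, the Morrey--Campanato bound \eqref{prop1ii} and Lemma \ref{lemaradialGN}, convert this to an $L^2$ lower bound on a ball via a Gagliardo--Nirenberg-type inequality applied to a cutoff (your combination of \eqref{GNine} with the interpolation $\|f\|_{L^{\sigma_c}}\leq\|f\|_{L^2}^{1-s_c}\|f\|_{L^{2N/(N-2)}}^{s_c}$ and Sobolev reproduces exactly the inequality \eqref{GNF} the paper invokes), and transport the mass back to $t=0$ with the virial quantity and the space-time bound \eqref{prop1i} applied at $2\tau_0$. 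Working in unrescaled variables rather than with the normalized profile $w$, and spelling out the backward-in-time step that the paper delegates to \cite{CF20}, are only presentational differences; all the exponent bookkeeping, including the origin of $F_*^{\frac{1+s_c}{1-s_c}}$, checks out.
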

\begin{proof}
	The argument is quite similar to the one in \cite[Proposition 4.2]{CF20} and we only sketch the details. Given $v(\tau_0)$ we define
	\begin{align}
	w(x)=\lambda_v^{\frac{2-b}{2\sigma}}(\tau_0)v(\lambda_v(\tau_0)x,\tau_0),
	\end{align}
where $\lambda_v(\tau_0)=\|\nabla v(\tau_0)\|^{-\frac{1}{1-s_c}}_{L^2}$. By a straightforward calculation and assumption \eqref{Ev}, we have
\begin{equation}\label{wgrad}
\|\nabla w\|_{L^2}^2=1\,\,\, \mbox{and} \,\,\, E[w]\leq \frac{1}{4}.
\end{equation}
Thus, by the definition of the energy \eqref{Energy}, we get
\begin{equation}\label{xbw}
\int |x|^{-b}|w|^{2\sigma+2}\,dx\geq \frac{\sigma+1}{2}.
\end{equation}
Recalling the definition of $F_{\ast}$ in \eqref{F}, for $J\geq M_0^{\alpha_1}F_*$, the estimate \eqref{prop1ii} obtained in Proposition \ref{prop1} and the definition of the semi-norm $\rho$ \eqref{defrho} imply
\begin{equation}\label{wJast}
\rho(w,J)\leq \rho(v(\tau_0),M_0^{\alpha_1}\sqrt{\tau_0})\leq c M_0^2.
\end{equation}
On the other hand, for $J\geq CM_0^{\frac{2\sigma+2-\sigma N}{(1-s_c)(2-\sigma N)}}$ and all $\eta>0$, from estimate \eqref{GNradial} we get
	\begin{align}
	\int_{|x|\geq J}|x|^{-b}|w|^{2\sigma+2}\,dx&\leq \eta \|\nabla w\|_{L^2}^2+\frac{C_\eta}{J^{2(1-s_c)}}[\rho(w,J)]^{\frac{2\sigma+2-\sigma N}{2-\sigma N}}\\
	&\leq \eta+\frac{C_\eta}{C^{2(1-s_c)}M_0^{\frac{2(2\sigma+2-\sigma N)}{2-\sigma N}}}[\rho(w,J)]^{\frac{2\sigma+2-\sigma N}{2-\sigma N}}.
	\end{align}
Now, taking
\begin{align}\label{Aep}
	J_{*}=C_{*}\max[M_0^{\alpha_1}F_*,M_0^{\frac{2\sigma+2-\sigma N}{(1-s_c)(2-\sigma N)}}],
	\end{align}
for $C_*>1$ large enough, the previous two inequalities (with $\eta>0$ small enough) and \eqref{xbw} yield
	\begin{align}\label{sigma+14}
	\int_{|x|\leq J_*}|x|^{-b}|w|^{2\sigma+2}\,dx\geq\frac{\sigma+1}{4}.
	\end{align}

Next, we apply the Gagliardo-Nirenberg type inequality 
\begin{align}\label{GNF}
	\int|x|^{-b}|f|^{2\sigma+2}\,dx\leq c\|\nabla f\|_{L^2}^{2\sigma s_c+2}\|f\|_{L^2}^{2\sigma(1-s_c)}
	\end{align}
(see \cite[Theorem 1.2]{Farah}) to deduce a lower bound on the $L^2$ norm of $v(\tau_0)$ around the origin. Indeed, let $\varphi\in C^{\infty}_0(\Real^N)$ be a cut-off function such that
	\begin{align}\label{fi}
	\varphi(x)=\left\{
	\begin{array}{ll}
	1,&|x|\leq 1\\
	0,&|x|\geq 2
	\end{array}
	\right.
	\,\,\,\mbox{ and }\,\,\, \varphi_{A}(x)=\varphi\left(\frac{x}{{A}}\right).
	\end{align}
It is easy to see that
	\begin{align}
	\|\nabla (\varphi_{J_*} w)\|_{L^2}&=\|\nabla \varphi_{J_*} w+\varphi_{J_*}\nabla w\|_{L^2}\leq \|\nabla \varphi_{J_*} w\|_{L^2}+\|\varphi_{J_*}\nabla w\|_{L^2}\\&\leq c(\|w\|_{L^2(|x|\leq 2J_*)}+1),
	\end{align}
where we have used \eqref{wgrad} in the last inequality.  Combining  \eqref{sigma+14}, \eqref{GNF} and the previous inequality, we deduce
	\begin{align}
	\frac{\sigma+1}{4}&\leq \int_{|x|\leq J_*} |x|^{-b}|w|^{2\sigma+2}\,dx\leq \int|x|^{-b}|\varphi_{J_*} w|^{2\sigma+2}\,dx\nonumber\\&\leq c\|\nabla (\varphi_{J_*} w)\|_{L^2}^{2\sigma s_c+2}\|\varphi_{J_*} w\|_{L^2}^{2\sigma(1-s_c)}\leq c\left(\|w\|_{L^2(|x|\leq 2J_*)}^{2\sigma+2}+\|w\|_{L^2(|x|\leq 2J_*)}^{2\sigma(1-s_c)}\right).
	\end{align}
	Therefore, there exists a constant $c_3>0$ such that
	\begin{align}\label{364}
	c_3\leq \int_{|y|\leq 2 J_*}|w|^2\,dx=\frac{1}{\lambda_v^{2s_c}(\tau_0)}\int_{|x|\leq 2J_*\lambda_v(\tau_0)}|v(\tau_0)|^2\,dx.
	\end{align}
	
Finally, the end of the proof of \cite[Proposition 4.2]{CF20} implies that the above estimate is also verified at the initial time, where we use the assumption \eqref{hpprop2i},  and complete the proof of Proposition \ref{prop2}.

\end{proof}

Once Propositions \ref{prop1} and \ref{prop2} are established, the proofs of Theorem \ref{scteo1}-\ref{scteo2} and Corollary \ref{cor13} follow from exactly the same arguments as those of in \cite[Theorem 1.1-1.2 and Corollary 1.3]{CF20}  and hence need not be repeated here.

\vspace{0.5cm}
\noindent 
\textbf{Acknowledgments.} L.G.F. was partially supported by Coordena\c{c}\~ao de Aperfei\c{c}oamento de Pessoal de N\'ivel Superior - CAPES, Conselho Nacional de Desenvolvimento Cient\'ifico e Tecnol\'ogico - CNPq and Funda\c{c}\~ao de Amparo a Pesquisa do Estado de Minas Gerais - Fapemig/Brazil. %C.G. was partially supported by ???.

\newcommand{\Addresses}{{% additional braces for segregating \footnotesize
		\bigskip
		\footnotesize
		
		MYKAEL A. CARDOSO, \textsc{Department of Mathematics, UFPI, Brazil}\par\nopagebreak
		\textit{E-mail address:} \texttt{mykael@ufpi.edu.br}
		
		\medskip
		
		LUIZ G. FARAH, \textsc{Department of Mathematics, UFMG, Brazil}\par\nopagebreak
		\textit{E-mail address:} \texttt{farah@mat.ufmg.br}

}}
\setlength{\parskip}{0pt}
\Addresses

\end{document}